\theoremstyle{plain}% default
\newtheorem{theorem}{Theorem}[section]
\newtheorem{lemma}[theorem]{Lemma}
\newtheorem{proposition}[theorem]{Proposition}
\newtheorem{corollary}[theorem]{Corollary}
\theoremstyle{definition}
\newtheorem{definition}{Definition}[section]
\theoremstyle{remark}
\newtheorem{remark}{Remark}
\begin{document}

\title{Singularities of Functions on the Martinet Plane, Constrained Hamiltonian Systems and Singular Lagrangians}
\author{Konstantinos Kourliouros}
\address{Imperial College London, Department of Mathematics,  Huxley Building 180 Queen's Gate,\\
South Kensington Campus, London SW7, United Kingdom}
\ead{k.kourliouros10@imperial.ac.uk}

\begin{abstract}
We consider here the analytic classification of pairs $(\omega,f)$ where $\omega$ is a germ of a 2-form on the plane and $f$ is a quasihomogeneous function germ with isolated singularities. We consider only the case where $\omega$ is singular, i.e. it vanishes non-degenerately  along a smooth line $H(\omega)$ (Martinet case) and the function $f$ is such that the pair $(f,H(\omega))$ defines an isolated boundary singularity. In analogy with the ordinary case (for symplectic forms on the plane) we show that the moduli in the classification problem are analytic functions of 1-variable and that their number is exactly equal to the Milnor number of the corresponding boundary singularity. Moreover we derive a normal form for the pair $(\omega,f)$ involving exactly these functional invariants. Finally we give an application of the results in the theory of constrained Hamiltonian systems, related to the motion of charged particles in the quantisation limit in an electromagnetic field, which in turn leads to a list of normal forms of generic singular Lagrangians (of first order in the velocities) on the plane.
\end{abstract}

\begin{keyword}
Singular Symplectic Structures \sep Boundary Singularities \sep Normal Forms \sep Constrained Hamiltonian Systems \sep Singular Lagrangians
% \PACS{PACS code1 \and PACS code2 \and more}
% \subclass{MSC code1 \and MSC code2 \and more}
\end{keyword}

\maketitle

%\tableofcontents

\section{Introduction-Main Results}

In several local analysis problems arising in mathematical physics, control theory, dynamical systems e.t.c. one is led to consider the classification problem for pairs $(\omega,f)$, where $\omega$ is a germ of a closed 2-form on a manifold $M$ and $f$ is a function germ, with or without singularities. The most studied case is when the 2-form is nondegenerate, i.e. it defines a symplectic structure on $M$. Then $f$ can be viewed as a Hamiltonian function and the classification problem reduces to the well known problem of symplectic classification of singularities of functions (c.f. \cite{RA}, \cite{A'}, \cite{Bi}). In this direction, the 2-dimensional problem is drastically different than the higher dimensional one. Indeed, a symplectic structure on the plane is an area (volume) form and the classification of Hamiltonian functions on the plane becomes a problem of volume-preserving geometry. There, analytic normal forms exist in abundance (c.f. \cite{F3}, \cite{F1}, \cite{V} as well as \cite{C1}, \cite{G1}, \cite{G2}, \cite{Va} for related results) in contrast to the symplectic category \cite{Si}.

In this paper we consider a generalisation of the classification problem for pairs $(\omega,f)$ on a 2-manifold $M$, where now the 2-form $\omega$ is allowed to have singularities and thus it does not define a symplectic structure everywhere on $M$. This situation is typical when we consider Hamiltonian systems with constraints (c.f. \cite{D}, \cite{F}, \cite{J}, \cite{K}, \cite{Lic},  \cite{M1}, \cite{M2}, \cite{M3}, \cite{P1}, \cite{P2}).  

In analogy with the unconstrained case, we may define a Constrained Hamiltonian System (CHS) on a 2-manifold $M$, simply as a pair $(\omega,f)$ consisting of a function $f$ and a 2-form $\omega$ on $M$ as above. One may obtain such a system as the restriction of a Hamiltonian system in a general ambient symplectic manifold, on a 2-dimensional surface representing the constraints. Let $X_f$ be the ``Hamiltonian vector field'' associated to the pair $(\omega,f)$ through the equation:
\[X_f\lrcorner \omega=df.\]
This vector field is in general not defined and smooth everywhere on $M$. The obstruction to the existence and/or uniqueness of $X_f$ is obviously the set of zeros $H(\omega)$ of the 2-form $\omega$. In the theory of singularities of Constraints Systems (c.f. \cite{SZ}, \cite{Z2}) it is called the \textit{Impasse Hypersurface}, while in the theory of differential systems is usually called the \textit{Martinet hypersurface} (c.f. \cite{M2}), in honor of J. Martinet who was the first who studied systematically singularities of differential forms \cite{Mar}. The problem is thus to classify CHS  at impasse points (away from the impasse points the problem reduces to the ordinary symplectic classification of functions). 

It is easy to see that the germ of a generic singular 2-form $\omega$ on the plane can be reduced to Martinet normal form \cite{Mar}:
\[\omega=xdx\wedge dy.\]
The geometric invariants of the 2-form $\omega$ on the plane are just its Martinet curve of zeros $H(\omega)=\{x=0\}$, along with an orientation (in the real analytic (smooth) case) induced by the two symplectic structures in its complement. The orientation of the Martinet curve plays no role in the initial definition of singularity classes for the pair $(f,\omega)$ and thus, all the singularities are defined by the relative positions of the germ $f$ with the Martinet curve. In particular, the pair $(H(\omega),f)$ can be viewed as defining a germ of a ``boundary singularity'' at the origin of the plane i.e. such that:
\begin{itemize}
\item either $f$ has an isolated critical point at the origin,
\item or $f$ is non-singular but its restriction $f|_{H(\omega)}$ on the Martinet curve has an isolated critical point at the origin.
\end{itemize}
Thus, in order to study the singularities of pairs $(\omega,f)$, one may fix an arbitrary boundary singularity $(f,H)$ and study possible normal forms of degenerate 2-forms $\omega$, whose zero set $H(\omega)$ is exactly the curve $H$, under the action of the pseudogroup $\mathcal{R}_{f,H}$ of diffeomorphisms preserving the boundary singularity.  

The study of isolated boundary singularities has been initiated by V. I. Arnol'd in \cite{A1} (see also \cite{A0}, \cite{A00}, \cite{A2} for general references) where he extended the $\mathbf{A}$, $\mathbf{D}$, $\mathbf{E}$ classification of simple singularities to include also the $\mathbf{B}$, $\mathbf{C}$, $\mathbf{F}_4$ series of Weyl groups in the scheme of singularity theory. The list of simple normal forms obtained by Arnol'd is given for convenience in Table \ref{tab:1}.
\begin{table}
% table caption is above the table
\caption{Simple singularities of functions on a 2-manifold with boundary $H=\{x=0\}$}
\label{tab:1}       % Give a unique label
% For LaTeX tables use
\begin{tabular}{llll}
\hline\noalign{\smallskip}
 $\textbf{A}_{\mu}$ & $\textbf{B}_\mu$ & $\textbf{C}_\mu$ &  $\textbf{F}_4$\\
\noalign{\smallskip}\hline\noalign{\smallskip}
$x+y^{\mu+1}$ & $x^{\mu}+y^2$ & $xy+y^{\mu}$ & $x^2+y^3$ \\
\noalign{\smallskip}\hline\noalign{\smallskip}
$\mu \geq 1$ & $\mu \geq 2$ & $\mu \geq 2$ & $\mu=4$ \\
\noalign{\smallskip}\hline
\end{tabular}
\end{table}

The number $\mu$ in the list is an important invariant. It is called the Milnor number, or the multiplicity of the boundary singularity and it is intimately related with the classification problem of pairs $(\omega,f)$ we will study, as is the ordinary Milnor number of an isolated singularity in the symplectic case (c.f. \cite{F1} and also \cite{G2}). The manifestation of the Milnor number in symplectic (isochore in higher dimensions) classification problems in the ordinary (without boundary) case, is due to the following fact: the corresponding deformation space of a germ of a symplectic form $\omega$ on the plane (relative to diffeomorphisms preserving the germ $f$) is exactly equal to the Brieskorn module of the singularity $f$ and in particular, according to the Brieskorn-Deligne-Sebastiani theorem \cite{B}, \cite{S}, any such deformation space will be a free module of rank equal to the Milnor number of $f$, over the ring $\mathbb{C}\{f\}$ of analytic functions on (the values of) $f$. Thus, the classification problem reduces to a problem of relative de-Rham cohomology.    

The classification  problem in the Martinet case which we will study here, is not much different than the symplectic classification problem, despite the fact that the corresponding relative de-Rham cohomology theory for isolated boundary singularities has not yet been established in the literature. In particular one may conjecture, in analogy with the ordinary case, that  the number of functional invariants in the classification of pairs $(\omega,f)$ is exactly equal to the Milnor number of the boundary singularity $(f,H(\omega))$ and moreover, there exists a normal form for the pair involving exactly these invariants (which by the way will be analytic functions of 1-variable). Here we will prove this statement only for the special case where the pair $(f,H(\omega))$ is a quasihomogeneous boundary singularity (Theorem \ref{t2}) and in particular for any of the simple germs in Arnol'd's list in Table \ref{tab:1} (as well as for other unimodal, e.t.c. boundary singularities which are still quasihomogeneous c.f. \cite{A0}, \cite{A2} and \cite{A1} for the corresponding lists). 

Our proof for the quasihomogeneous case is very similar (almost identical in some parts) to the one proposed by J. P. Francoise in \cite{F1}, \cite{F2} for the ordinary case. While not that general, in order to include all isolated boundary singularities (i.e. away from Arnol'd's list), Francoise's method has the advantage that it is algorithmic in nature: one may construct step by step the characteristic invariants of the pair $(\omega,f)$ as well as the bounds of the corresponding norms. Also it has minimum prerequisites, such as the relative Poincar\'e lemma and the relative de-Rham's division lemma (see Section 2), while a general proof should go through several sheaf theoretic techniques. Thus, a main part of the paper is to prove (Theorem \ref{t1}) that the corresponding deformation module of a Martinet 2-form $\omega$ (relative to diffeomorphisms preserving a simple boundary singularity $(f,H(\omega))$) is a free $\mathbb{C}\{f\}$-module of rank equal to the Milnor number of the boundary singularity, i.e. equal to the corresponding number $\mu$ in Table \ref{tab:1}.            

One final remark: the classification of functions $f$ and Martinet 2-forms $\omega$ in higher dimensions is a much more complicated problem and the results of this paper do not generalise in this case. Probably, formal normal forms do exist but we don't discuss this here. Instead, we give an application of the 2-dimensional results in a problem arising in the geometric theory of Hamiltonian systems with constraints, that is, the problem of classification of generic singular Lagrangians (of first order in the velocities) on the plane, under variational (gauge) equivalence (Theorem \ref{tl}). Such Lagrangians, already studied by Dirac \cite{D} for the purposes of quantisation and which appear in high energy physics (c.f. \cite{F}, \cite{J}, \cite{P2}), in hydrodynamics and general vortex theory (c.f. \cite{A'}, \cite{Ko} and references therein) and also in control theory and sub-riemannian geometry \cite{M1}, \cite{M2}, \cite{M3} and several other instances, give rise to Euler-Lagrange equations which define a constrained Hamiltonian system $(\omega,f)$ and thus, they are subjectable to the analysis in this paper.

\section{Deformations of Singular Symplectic Strucures and Boundary Singularities}

We fix a coordinate system $(x,y)$ at the origin of $\mathbb{C}^2$ such that the boundary is given by the equation $H=\{x=0\}$. To a boundary singularity $(f,H)$  we associate its local algebra (c.f. \cite{A1})
\[\mathcal{Q}_{f,H}=\mathcal{O}/(x\frac{\partial f}{\partial x},\frac{\partial f}{\partial y}),\]
where the ideal $J_{f,H}$ in the denominator is the tangent space to the $\mathcal{R}_H$-orbit of $f$, i.e. under diffeomorphisms (right-equivalences) preserving the boundary $H$ (as usual $\mathcal{O}$ is the algebra of germs of analytic functions at the origin). We call it, in analogy with the ordinary case the \textit{Jacobian ideal of the boundary singularity $(f,H)$}. Its codimension, i.e. the $\mathbb{C}$-dimension $\mu$ of the vector space $\mathcal{Q}_{f,H}$ is called the \textit{multiplicity} or \textit{Milnor number of the boundary singularity} $(f,H)$ and it is an important invariant: it is related to the Milnor number $\mu_1$ of $f$:
\[\mu_1=dim_{\mathbb{C}}\mathcal{O}/(\frac{\partial f}{\partial x},\frac{\partial f}{\partial y})\]
and the Milnor number $\mu_0$ of its restriction $f|_H$ on the boundary:
\[\mu_0=dim_{\mathbb{C}}\mathcal{O}|_{H}/(\frac{\partial f}{\partial y}|_{H}),\]
by the formula:
\[\mu=\mu_1+\mu_0.\]

Topologically it can be interpreted as the rank of the relative homology group $H_1(X_s,X_s\cap H;\mathbb{Z})$ of the pair $(X_s,X_s\cap H)$, $s\neq 0$, of smooth Milnor fibers of $f$ and $f|_{H}$, as one may easily deduce from the exact homology sequence induced by the inclusion $X_s\cap H\subset X_s$ (the fiber $X_s\cap H$ consists of a finite number of points). In particular, according to a theorem of Arnol'd \cite{A1} which generalises Milnor's theorem \cite{Mil} for the boundary case, the space $X_s/X_s\cap H$ has the homotopy type of a bouquet of $\mu$ circles.  It follows that the pair $(X_s,X_s\cap H)$ is a Riemann surface with Betti number $\mu_1$ and $\mu_0+1$ distinguished points. For $s\rightarrow 0$, the $\mu_1$ cycles of $X_s$ and the $\mu_0$ segments joining the distinguished points shrink at the origin: they are called vanishing cycles and half-cycles respectively and they form a basis of $H_1(X_s,X_s\cap H;\mathbb{Z})$ (see \cite{A1}).  

Denote now by $\Omega^{\cdot}$, the complex of germs of analytic differential forms at the origin (where $\Omega^0=\mathcal{O}$ the algebra of analytic functions) and by $x\Omega^{\cdot}$ the subcomplex of forms that ``vanish on $H=\{x=0\}$'', in the sense that their coefficients belong in the ideal $x\Omega^0(=(x)\subset \Omega^0)$ generated by the equation of $H$. Notice that any form vanishing on $H$ vanishes automatically when evaluated at tangent vectors of $H$ (i.e. it has zero pull back by the embedding $H\hookrightarrow (\mathbb{C}^2,0)$), but the converse does not hold (take for example the 1-form $dx$). We distinguish by writing $\Omega^i_{H}$ for the space of $i$-forms with zero pull-back on $H$. Notice that with this notation $\Omega^0_H=x\Omega^0$,  $\Omega^2_H=\Omega^2$ (identically) and $x\Omega^1_H= x\Omega^0dx+x^2\Omega^0dy$. The space $x\Omega^2$ may be identified with the space of 2-forms whose zero set contains the curve $H=\{x=0\}$.  We write $x\Omega^2_*$ for the space of Martinet 2-forms (with zero set exactly equal to $H$). We will need the following local version of a type of relative Poincar\'e lemma for the complex $x\Omega^{\cdot}_{H}$ (c.f. \cite{DJZ}, \cite{Gi}):

\begin{lemma}
\label{l1}
For any closed $i$-form $\alpha \in x\Omega^i_{H}$ there exists an $(i-1)$-form $\beta \in x\Omega^{i-1}_H$ such that $\alpha=d\beta$.  
\end{lemma}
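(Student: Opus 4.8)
The plan is to prove Lemma \ref{l1} by constructing an explicit homotopy operator adapted to the boundary $H=\{x=0\}$, rather than using the usual radial (Euler) homotopy, which would not respect the subcomplex $x\Omega^{\cdot}_{H}$. The natural choice is the flow of the vector field $V=x\partial_x$, which vanishes on $H$ and deformation-retracts a neighbourhood of the origin onto $H$: explicitly $\phi_s(x,y)=(e^s x,y)$, so that $\phi_0=\mathrm{id}$ and $\phi_s\to(0,y)$ as $s\to-\infty$. I would then set, for $\alpha\in x\Omega^i_{H}$,
\[
K\alpha=\int_{-\infty}^{0}\phi_s^*(\iota_V\alpha)\,ds,
\]
and aim to prove two things: that $K$ maps $x\Omega^i_{H}$ into $x\Omega^{i-1}_{H}$, and that it satisfies the chain-homotopy identity $\alpha=dK\alpha+Kd\alpha$. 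The lemma follows at once, since for closed $\alpha$ the second term drops out and $\beta:=K\alpha$ is the required primitive lying in the subcomplex.

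First I would establish the homotopy identity. Integrating Cartan's formula $\tfrac{d}{ds}\phi_s^*\alpha=\phi_s^*\mathcal{L}_V\alpha=d(\phi_s^*\iota_V\alpha)+\phi_s^*(\iota_V d\alpha)$ over $s\in(-\infty,0]$ yields $\alpha-\lim_{s\to-\infty}\phi_s^*\alpha=dK\alpha+Kd\alpha$. The boundary term at $-\infty$ vanishes because the coefficients of every $\alpha\in x\Omega^{\cdot}$ lie in the ideal $(x)$ and $\phi_s^*x=e^s x$, so $\phi_s^*\alpha\to 0$; this is exactly where membership in $x\Omega^{\cdot}$ (vanishing on $H$) is used, and it plays the role of the basepoint condition in the classical Poincar\'e lemma.

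The crux is to check that $K$ preserves the subcomplex, which I would verify directly in the two relevant degrees via the change of variables $t=e^s x$, under which $K$ becomes a concrete integral weighted by $t$. For a $1$-form $\alpha=xa\,dx+x^2b\,dy\in x\Omega^1_{H}$ one computes $\iota_V\alpha=x^2a$ and hence $K\alpha=\int_0^x t\,a(t,y)\,dt$, whose Taylor expansion begins in order $x^2$, so $K\alpha\in x^2\Omega^0=x\Omega^0_{H}$. For a $2$-form $\alpha=xc\,dx\wedge dy\in x\Omega^2_{H}$ one gets $\iota_V\alpha=x^2c\,dy$ and $K\alpha=\big(\int_0^x t\,c(t,y)\,dt\big)\,dy\in x^2\Omega^0\,dy\subset x\Omega^1_{H}$. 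In each case the extra factor of $x$ produced by the contraction $\iota_{x\partial_x}$, together with the weight $t$ in the integral, supplies precisely the divisibility by $x^2$ needed to land in the smaller space $x\Omega^{i-1}_{H}$; this divisibility, and not the homotopy identity itself, is where the adapted (rather than radial) choice of $V$ is essential.

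With $K$ acting on $x\Omega^{\cdot}_{H}$, closedness of $\alpha$ gives $Kd\alpha=0$, whence $\alpha=d\beta$ with $\beta=K\alpha\in x\Omega^{i-1}_{H}$, the degenerate case $i=0$ (where a closed form in $x^2\Omega^0$ is forced to vanish) being trivial. I expect the only real obstacle to be the bookkeeping of the third step, namely confirming that the primitive stays inside the subcomplex in each degree, since the naive Poincar\'e primitive is only guaranteed to lie in $\Omega^{\cdot}$. On the plane only $i=1,2$ occur, so this reduces to the two explicit computations above and presents no genuine difficulty; the same operator works in higher dimensions, but there the divisibility check would have to be organised more systematically.
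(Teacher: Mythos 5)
Your proof is correct, but it takes a genuinely different route from the paper's. The paper uses the standard radial contraction $F_t(x,y)=(tx,ty)$ to the origin and the usual Poincar\'e homotopy operator $\beta=\int_0^1F_t^*(V_t\lrcorner\alpha)\,dt$; the whole content of its proof is the observation that this operator \emph{does} preserve the subcomplex, because the radial field $V_t=(x,y)$ is tangent to $H$ (so $V_t\lrcorner\alpha\in x\Omega^{i-1}_H$ whenever $\alpha\in x\Omega^i_H$) and $F_t(H)\subset H$ (so pullback by $F_t$ preserves both the ideal $(x)$ and the zero-pullback condition). So your opening claim that the radial homotopy ``would not respect the subcomplex'' is mistaken --- it does, and that is exactly the paper's argument. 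Your alternative, contracting onto $H$ itself along the flow of $x\partial_x$ and computing $K\alpha=\int_0^x t\,a(t,y)\,dt$ explicitly in each degree, is nonetheless a valid and self-contained proof: the homotopy identity, the vanishing of the boundary term at $s=-\infty$ (which uses coefficients in $(x)$), the convergence of the improper integral (exponential decay $e^{2s}$), and the divisibility by $x^2$ of the primitives are all checked correctly. What your version buys is completely explicit formulas for the primitive and a transparent accounting of where each factor of $x$ comes from; what it costs is that the verification is degree-by-degree and, as you note, would need to be organised more carefully in higher dimensions, whereas the paper's tangency argument is uniform in the degree and the dimension.
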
       
\begin{proof}
By the classical Poincar\'e lemma the 1-parameter family of maps $F_t(x,y)=(tx,ty)$, $t\in [0,1]$, is a contraction at the origin, it preserves $H$, $F_t(H)\subset  H$ and is such that: $F^*_1\alpha=\alpha$, $F^*_0\alpha=0$ and $F^*_1\alpha=d\beta$, where $\beta$ is defined by
\[\beta=\int_0^1F_t^*(V_t\lrcorner \alpha)dt\]
and the vector field $V_t=dF_t/dt=(x,y)$ is defined as the generator of $F_t$. Notice that by definition $V_t$ is tangent to $H$ for all $t$. Now, since $\alpha$ vanishes on $H$ to second order, the $(i-1)$-form $V_t\lrcorner \alpha$ vanishes also on $H$ and since $V_t$ is tangent to $H$ it follows that $V_t\lrcorner \alpha \in x\Omega^{i-1}_H$. By the fact that $F_t(H)\subset H$, it follows that $\beta \in x\Omega^{i-1}_H$.   
\end{proof}

Fix now a pair $(f,H)$, where $f$ has an isolated singular point at the origin of finite multiplicity $\mu=\mu_1+\mu_0$. The differential $df$ defines an ideal in the algebra $\Omega^{\cdot}$ of germs of differential forms at the origin, which induces an ideal in all the subalgebras $x\Omega^{\cdot}$, and $x\Omega^{\cdot}_H$ we consider. The lemma below gives necessary and sufficient conditions for the ideal membership problem. It is an analog in the relative case of de-Rham's division lemma \cite{Der} (see also \cite{E}). 

\begin{lemma}
\label{l2}
\noindent \begin{itemize}
\item[(a)] Let $\omega \in x\Omega^2$. Then $\omega=df\wedge \eta$ holds for some $\eta \in x\Omega^1_H$ if and only if $\omega \in J_{f,H}x\Omega^2$. 
\item[(b)] For any 1-form $\alpha \in x\Omega^1_H$ such that $\alpha \wedge df=0$, there exists a function germ $g\in x\Omega^0_H$ such that $\alpha=gdf$. 
\end{itemize}
\end{lemma}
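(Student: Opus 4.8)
The plan is to treat the two parts separately: part~(a) is a direct computation that only tracks the powers of $x$ through the exterior product, whereas part~(b) is where the genuine content lies, namely the relative analogue of de~Rham's division lemma, which I would reduce to the classical scalar division statement together with a coprimality bookkeeping. Throughout write $f_x=\partial f/\partial x$ and $f_y=\partial f/\partial y$, and recall that $x\Omega^1_H=x\Omega^0dx+x^2\Omega^0dy$, that $x\Omega^0_H=x\cdot\Omega^0_H=x^2\Omega^0$, and that $J_{f,H}x\Omega^2$ has coefficient ideal $(x^2f_x,xf_y)$.

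For part~(a) I would take a general $\eta=p\,dx+q\,dy\in x\Omega^1_H$, so that $p\in(x)$ and $q\in(x^2)$, and simply compute $df\wedge\eta=(f_xq-f_yp)\,dx\wedge dy$. Writing $p=x\tilde p$ and $q=x^2\tilde q$ shows the coefficient $f_xq-f_yp=x^2f_x\tilde q-xf_y\tilde p$ lies in $(x^2f_x,xf_y)$, which gives the ``only if'' direction. Conversely, given $\omega=c\,dx\wedge dy$ with $c=x^2f_xA+xf_yB\in J_{f,H}x\Omega^2$, I would read off the solution explicitly by setting $q=x^2A\in(x^2)$ and $p=-xB\in(x)$, so that $\eta=p\,dx+q\,dy\in x\Omega^1_H$ satisfies $df\wedge\eta=\omega$. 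Thus part~(a) requires no hypothesis beyond the algebraic shape of $J_{f,H}$ and is purely a matter of matching generators.

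For part~(b), the condition $\alpha\wedge df=0$ for $\alpha=a\,dx+b\,dy$ (with $a\in(x)$ and $b\in(x^2)$) is equivalent to the single scalar relation $af_y=bf_x$. Since $(f,H)$ is a boundary singularity, $f$ has at most an isolated critical point, so $f_x$ and $f_y$ have no common factor in the local ring $\mathcal{O}$; as $\mathcal{O}$ is a unique factorisation domain, the relation $f_x\mid af_y$ forces $f_x\mid a$. Writing $a=gf_x$ and cancelling $f_x$ (nonzero as a germ) in $gf_xf_y=bf_x$ then yields $b=gf_y$, i.e. $\alpha=g\,df$ for some $g\in\mathcal{O}$ --- this is exactly the classical de~Rham division step. (When $f$ is nonsingular one of $f_x,f_y$ is a unit and the same $g$ is obtained by a single division.)

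It remains to upgrade $g\in\mathcal{O}$ to $g\in x\Omega^0_H=x^2\Omega^0$, and this is the one place where the boundary structure is indispensable. Finiteness of $\mu_0=\dim_{\mathbb{C}}\mathcal{O}|_H/(f_y|_H)$ guarantees $f_y|_H=f_y(0,y)\not\equiv0$, i.e. $x\nmid f_y$, so that $\gcd(x^2,f_y)=1$. Combining this with $b=gf_y\in(x^2)$ gives $x^2\mid g$, hence $g\in x^2\Omega^0=x\Omega^0_H$, as required. The main (and really the only non-formal) obstacle is therefore the classical coprimality/division step producing $g$; once that is in hand, the relative refinements in both parts are forced by simply keeping track of the orders of vanishing in $x$, the crucial input being that $f_y$ is not divisible by $x$ --- precisely the condition that $f|_H$ has an isolated singularity on the boundary.
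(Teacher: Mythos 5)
Your proof is correct. Part (a) coincides with what the paper leaves as ``an obvious calculation''. For part (b) you take a somewhat different route: the paper peels the factor of $x$ off the \emph{form} in two stages --- writing $\alpha=x\alpha_1$ with $\alpha_1\in\Omega^1_H$, using the restriction $(df\wedge\alpha_1)|_{x=0}=0$ together with $df|_{x=0}\neq 0$ to force $\alpha_1=x\alpha_0$, and then invoking the classical de~Rham division lemma as a black box on $\alpha_0$ --- whereas you prove the division step from scratch via the UFD/coprimality argument on the scalar relation $af_y=bf_x$ (valid because $f_x,f_y$ cut out an isolated point, hence share no factor), and you then extract the required divisibility $x^2\mid g$ in a single step from $b=gf_y\in(x^2)$ and $x\nmid f_y$. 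The decisive input is identical in both arguments, namely that $f_y$ does not vanish identically on $H$ (finiteness of $\mu_0$), together with isolatedness of the critical locus; your version has the advantage of being self-contained (it does not cite de~Rham's lemma, which in two variables reduces to exactly your coprimality computation) and of making visible precisely where each finiteness hypothesis enters, while the paper's version isolates a reusable intermediate statement (division in $\Omega^1_H$) and keeps the bookkeeping at the level of forms rather than coefficients. Your parenthetical treatment of the nonsingular case (one of $f_x,f_y$ a unit) correctly covers the situation where the coprimality/cancellation step would degenerate, so no gap remains.
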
  

\begin{proof}
\noindent \begin{itemize}
\item[(a)] the proof is an obvious calculation
\item[(b)] It suffices to show that the relative de-Rham's division lemma is true for the complex $\Omega^{1}_H$: indeed, if we write $\alpha=x\alpha_1$ for some $\alpha_1 \in \Omega^1_H$, then $df\wedge (x\alpha_1)=xdf\wedge \alpha_1=0$ and since $x$ is a non-zero divisor it will follow from the relative de-Rham division lemma in $\Omega^{1}_H$ that there exists a function $g_1\in \Omega^0_H$ such that $\alpha_1=g_1df$. But then $\alpha=xg_1df$ and the germ $g=xg_1 \in x\Omega^0_H$ as we wanted. To show now that the relative division lemma is true in $\Omega^1_H$ it suffices to notice that the condition $df\wedge \alpha=0$ for any 1-form $\alpha \in \Omega^1_H$ forces it again to be of the form $\alpha=x\alpha_0$ for some $1$-form $\alpha_0$. Indeed, this follows from restriction $(df\wedge \alpha)|_{x=0}=0$ and the fact that $df|_{x=0}\neq 0$ (because $f$ has isolated singularities). Then, as before, the equation $df\wedge (x\alpha_0)=xdf\wedge \alpha_0=0$ implies that $df\wedge \alpha_0=0$ and thus, by the ordinary de Rham division lemma, there exists a function germ $g_0$ such that $\alpha_0=g_0df$. Thus $\alpha=xg_0df$ and the lemma is proved. 
\end{itemize} 
\end{proof}

\begin{remark}
It is important to notice here that the condition $df\wedge \alpha=0$ implied on a 1-form $\alpha$ vanishing on $H$ (i.e. for $\alpha \in \Omega^1_H$ or $\alpha \in x\Omega^1_H$) prevents it from being closed (=exact). Indeed, $df\wedge dh=0$ means that the function $h$ is constant on the fibers of $f$ and thus it is a holomorphic function on the values of $f$: $h=h(f)$. But then, since $h$ should vanish on $x=0$ it means that it should vanish everywhere in the neighborhood under consideration (because the fibers of $f$ are almost everywhere transversal to $x=0$). 
\end{remark}

By the extension of Tougeron's theorem on the finite determinacy of boundary singularities $(f,H)$ proved by V. I. Matov \cite{Mat} we may suppose that $H=\{x=0\}$ and $f$ is polynomial of sufficiently high degree ($\geq \mu+1$). Write $\mathcal{R}_{f,H}$ for the pseudogroup of symmetries of the pair $(f,H)$. In the following lemma we identify the set of (infinitesimal) trivial deformations of Martinet 2-forms relative to $\mathcal{R}_{f,H}$-action:
\begin{lemma}
\label{l3}
Let $\omega$ be a germ of a Martinet 2-form. The tangent space to the orbit of $\omega$ under the $\mathcal{R}_{f,H}$-action consists of all 2-forms of the form:
\[\frak{r}_{f,H}(\omega)=df\wedge d(x\Omega^0_H).\] 
\end{lemma}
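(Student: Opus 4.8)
The plan is to compute the tangent space $\frak{r}_{f,H}(\omega)$ as the image of the infinitesimal $\mathcal{R}_{f,H}$-action, i.e. as the set $\{L_V\omega\}$ as $V$ ranges over the Lie algebra of $\mathcal{R}_{f,H}$. First I would identify this Lie algebra: the generator of a one-parameter family $\phi_t$ preserving both $f$ and $H$ is a vector field $V = a\partial_x + b\partial_y$ that is tangent to $H=\{x=0\}$ (equivalently $a\in x\Omega^0$) and annihilates $f$ (i.e. $V(f) = af_x + bf_y = 0$). Since $\omega$ is a top-degree form we have $d\omega=0$, so Cartan's formula collapses to $L_V\omega = d(V\lrcorner\omega)$; hence the problem reduces to understanding the $1$-forms $\eta := V\lrcorner\omega$ produced by infinitesimal symmetries.

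The heart of the argument is to observe that the two conditions defining $V$ are exactly the two hypotheses of the relative division lemma (Lemma \ref{l2}(b)) applied to $\eta$. Writing $\omega$ in Martinet form $\omega = xu\,dx\wedge dy$ with $u$ a unit, contracting gives $\eta = -xub\,dx + xua\,dy$; here the $dx$-coefficient lies in $x\Omega^0$ for free, while the $dy$-coefficient lies in $x^2\Omega^0$ precisely when $a\in x\Omega^0$, so that $\eta\in x\Omega^1_H$ if and only if $V$ is tangent to $H$. Likewise $\eta\wedge df = -xu\,V(f)\,dx\wedge dy$, so $\eta\wedge df=0$ if and only if $V(f)=0$. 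With both conditions in hand, Lemma \ref{l2}(b) furnishes a germ $g\in x\Omega^0_H$ with $\eta = g\,df$, whence $L_V\omega = d(g\,df) = dg\wedge df$, proving the inclusion $\frak{r}_{f,H}(\omega)\subseteq df\wedge d(x\Omega^0_H)$.

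For the reverse inclusion I would run the construction backwards: given $g\in x\Omega^0_H = x^2\Omega^0$, solve $\eta = g\,df$ for $V$, obtaining $a = gf_y/(xu)$ and $b=-gf_x/(xu)$. The divisibility of $g$ by $x^2$, together with $u$ being a unit, is exactly what makes $a,b$ analytic and forces $a\in x\Omega^0$, so $V$ is tangent to $H$; a one-line cancellation shows $V(f)=0$, so $V$ lies in the Lie algebra. Then $df\wedge dg = -L_V\omega$ belongs to $\frak{r}_{f,H}(\omega)$ (the sign being irrelevant for a linear space), which gives the opposite inclusion and hence equality.

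I expect the main obstacle to be not any single computation but the correct handling of the relative division lemma and the divisibility bookkeeping around it. The delicate point is verifying that $\eta$ genuinely lands in the subspace $x\Omega^1_H$ on which Lemma \ref{l2}(b) operates: this requires the first-order (Martinet) vanishing of $\omega$ along $H$ to supply one factor of $x$ and the tangency condition $a\in x\Omega^0$ to supply the second, and it is precisely this same factor of $x^2$ (the identification $x\Omega^0_H = x^2\Omega^0$) that, read in reverse, guarantees analyticity and tangency of the vector field recovered in the converse direction. If the Martinet structure or the membership in $x\Omega^1_H$ were mishandled the division lemma would not apply, and the rank count underlying the later theorems would be thrown off, so this is the step that must be pinned down with care.
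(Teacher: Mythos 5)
Your proof is correct and follows essentially the same route as the paper: Cartan's formula reduces $L_V\omega$ to $d(V\lrcorner\omega)$, division of $V\lrcorner\omega$ by $df$ produces the germ $g\in x\Omega^0_H$, and the converse is obtained by dualising $g\,df$ through $\omega$. Your only (harmless) variation is to invoke the relative division Lemma~\ref{l2}(b) directly after checking $V\lrcorner\omega\in x\Omega^1_H$ in coordinates, where the paper applies ordinary de Rham division and then argues separately that $g$ vanishes to second order on $H$.
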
 
\begin{proof}
Let $v$ be an element of the Lie algebra $\frak{r}_{f,H}$. The infinitesimal deformation of $\omega$ associated to $v$ is by definition an element of the form $L_v\omega$ (where $L$ is the Lie derivative). We have that $df\wedge(v\lrcorner \omega)=L_v(f)\omega=0$ and thus, by de-Rham's division with $df$, there exists a function germ $g$ such that $v\lrcorner \omega=gdf$. Since the 1-form $v\lrcorner \omega$ vanishes on $H$ to second order (because both $v$ and $\omega$ vanish on $H$) we conclude that $g\in x\Omega^0_H$. It follows   that $d(v\lrcorner \omega)=L_v\omega=df\wedge d(-g)$, $g\in x\Omega^0_H$. Conversely, let $g\in x\Omega^0_H$ be such that there exists a vector field $v$ with $L_v\omega=df\wedge dg$. In fact define $v$ as the dual of the 1-form $gdf$ through $\omega$: $v\lrcorner \omega=-gdf$ (this is possible because $g$ vanishes on $H$). Obviously $L_v(f)=0$ and $v$ is vanishes on $H$ since $g$ vanishes on $H$ to second order. Thus $v\in \frak{r}_{f,H}$ and the lemma is proved. 
\end{proof}

It follows that the quotient space
\[\mathcal{D}_{f,H}(\omega)=x\Omega^2/df\wedge d(x\Omega^0_H)\]
consists of the nontrivial infinitesimal deformations of the Martinet 2-form $\omega$ relative to the symmetries of the boundary singularity. Along with the $\mathbb{C}$-linear space structure, the deformation space $\mathcal{D}_{f,H}(\omega)$ (which we will denote simply by $\mathcal{D}(\omega)$ ) has a natural $\mathbb{C}\{f\}$-module structure with multiplication by $f$.  We call it the \textit{deformation module of the Martinet germ $\omega$}. In the next section we will show that this module is a free module of rank $\mu=\mu_1+\mu_0$ over $\mathbb{C}\{f\}$. This statement is analogous to the classical Brieskorn-Deligne-Sebastiani\footnote{M. Sebastiani proved the freeness of the Brieskorn module in higher dimensions.} theorem \cite{B}, \cite{S} for ordinary singularities (i.e. without boundary). This finiteness result along with the following proposition are cornerstones in the classification problem.   
\begin{proposition}
\label{p1}
Fix a boundary singularity $(f,H)$. Let $\omega$ and $\omega'$ be two germs of Martinet 2-forms at the origin, such that $\omega-\omega' \in df\wedge d(x\Omega^0_{H})$. Then there exists a diffeomorphism $\Phi \in \mathcal{R}_{f,H}$ such that $\Phi^*\omega'=\omega$.
\end{proposition}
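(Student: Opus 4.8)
The plan is to integrate the infinitesimal statement of Lemma \ref{l3} by Moser's homotopy (path) method. By hypothesis we may write $\omega-\omega'=df\wedge dg$ for a single function germ $g\in x\Omega^0_H$, i.e. one vanishing on $H$ to second order. I would connect the two forms by the linear path $\omega_t=(1-t)\omega+t\omega'$, $t\in[0,1]$, and look for a flow $\Phi_t\in\mathcal{R}_{f,H}$ with $\Phi_0=\mathrm{id}$ and $\Phi_t^*\omega_t\equiv\omega$; evaluating at $t=1$ then produces the required $\Phi:=\Phi_1$ with $\Phi^*\omega'=\omega$. Generating $\Phi_t$ by a time-dependent vector field $v_t$ and using that every $2$-form on the surface is closed, the condition $\frac{d}{dt}\Phi_t^*\omega_t=\Phi_t^*(L_{v_t}\omega_t+\dot\omega_t)=0$ reduces (since $\dot\omega_t=\omega'-\omega=-df\wedge dg$) to the homological equation $d(v_t\lrcorner\omega_t)=df\wedge dg$, which I would solve by imposing the stronger pointwise equation $v_t\lrcorner\omega_t=-g\,df$, whereupon $d(-g\,df)=-dg\wedge df=df\wedge dg$ automatically.

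The first and most delicate point is to verify that the path stays inside the Martinet class, since otherwise $v_t$ cannot be recovered from $\omega_t$. Writing $\omega=xu\,dx\wedge dy$ and $\omega'=xu'\,dx\wedge dy$ with $u,u'$ units, the identity $\omega-\omega'=df\wedge dg$ reads $x(u-u')=f_xg_y-f_yg_x$; dividing by $x$ and using $g=x^2h\in x\Omega^0_H$ one finds $(u-u')(0)=-2f_y(0)h(0)$. Here the defining feature of a boundary singularity intervenes: $f_y$ vanishes at the origin, either because $f$ is singular there, or because $f|_H$ is. Hence $u(0)=u'(0)\neq 0$, the coefficient $\tilde u_t:=(1-t)u+tu'$ of $\omega_t$ takes the nonzero value $u(0)$ at the origin for every $t$, and by compactness of $[0,1]$ it is a unit on a fixed neighborhood of the origin; thus each $\omega_t$ is genuinely Martinet.

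With this secured, I would solve $v_t\lrcorner\omega_t=-g\,df$ explicitly: writing $v_t=P_t\partial_x+Q_t\partial_y$, matching coefficients gives $P_t=-gf_y/(x\tilde u_t)$ and $Q_t=gf_x/(x\tilde u_t)$. Since $g/x\in x\Omega^0$ and $\tilde u_t$ is a unit, both $P_t,Q_t$ are analytic germs lying in the ideal $(x)$; therefore $v_t$ is a bona fide analytic vector field that vanishes on $H$, and a direct pairing yields $v_t(f)=f_xP_t+f_yQ_t=0$. Consequently $v_t\in\mathfrak{r}_{f,H}$ and $d(v_t\lrcorner\omega_t)=df\wedge dg$, as needed.

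Finally I would integrate the time-dependent field $v_t$. Because $P_t,Q_t\in(x)$ they vanish at the origin, so the flow $\Phi_t$ exists for all $t\in[0,1]$ on a neighborhood of the origin and fixes it; since $v_t$ vanishes on $H$ and annihilates $f$, the flow preserves both $H$ and $f$, so $\Phi_t\in\mathcal{R}_{f,H}$. By construction $\Phi_t^*\omega_t\equiv\omega$, and $\Phi:=\Phi_1$ satisfies $\Phi^*\omega'=\omega$. The main obstacle is the first technical step — ensuring the homotopy never leaves the Martinet class — which is precisely where the boundary-singularity hypothesis $f_y(0)=0$ is essential; the remaining steps are the standard Moser bookkeeping, with the second-order vanishing $g\in x\Omega^0_H$ guaranteeing that $v_t$ has no poles.
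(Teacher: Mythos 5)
Your argument is correct and is essentially the paper's own proof: the same Moser homotopy along the linear path $\omega_t$, with the vector field defined by $v_t\lrcorner\omega_t=\pm g\,df$ and the observation that $g\in x\Omega^0_H$ forces $v_t$ to vanish on $H$ and annihilate $f$. The only place you go beyond the paper is the explicit check (via $f_y(0)=0$ and $(u-u')(0)=-2f_y(0)h(0)$) that the interpolated forms remain Martinet, a point the paper's proof takes for granted but which is indeed needed to solve for $v_t$.
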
   
\begin{proof}
The proof is by the homotopy method. Consider a 1-parameter family of Martinet 2-forms connecting $\omega$ and $\omega'$:
\[\omega_t=\omega+tdf\wedge dh, \quad h\in x\Omega^0_{H},\]
so that $\omega_0=\omega$ and $\omega_1=\omega'$.
We seek a 1-parameter family of vector fields $v_t\in \frak{r}_{f,H}$ such that
\[L_{v_t}\omega_t=0\Leftrightarrow d(v_t\lrcorner \omega_t)=df\wedge d(-h),\]
for all $t\in [0,1]$. Choose $v_t$ by $v_t\lrcorner \omega_t=hdf$. It preserves $\omega_t$ and it is also in $\frak{r}_{f,H}$ by the same reasoning as in the previous lemma. It follows that the time 1-map of the flow $\Phi_t$ of $v_t$ sends $\omega_0$ to $\omega_1$.
\end{proof}

\section{Finiteness and Freeness of the Deformation Module}

The classical Brieskorn-Deligne-Sebastiani theorem for an isolated singularity $f$ \cite{B}, \cite{S} and also \cite{Mal2}, states that the quotient space $\Omega^2/df\wedge d\Omega^0$ is a free $\mathbb{C}\{f\}$-module of rank equal to the Milnor number of the singularity $f$. It is proved using sheaf theoretic techniques and the properties of the Gauss-Manin connection on the vanishing cohomology bundle associated to the singularity. For some special cases of singularities though, other simpler proofs exist (c.f. \cite{V}, \cite{V1} for the Morse case). For the class of all quasihomogeneous singularities a proof was given by J. P. Fran\c{c}oise in \cite{F2} (see also \cite{F1}). Fran\c{c}oise's proof, while not covering the whole class of isolated singularities, it has the advantage that it is algorithmic, i.e. one may construct term by term the coefficients in the expansion of a form in the Brieskorn module as well as their bounds. Fran\c{c}oise's algorithm is rather general and it holds also in the global, polynomial case (c.f. \cite{Y}). Here we will provide the necessary modifications to include the boundary case as well. In particular we will prove:
\begin{theorem}
\label{t1}
Let $(f,H)$ be a quasihomogeneous boundary singularity at the origin of Milnor number $\mu$ and let $\omega$ be a germ of a Martinet 2-form whose zero set is exactly the curve $H$. Then the deformation module of $\omega$ is a free module of rank $\mu$ over $\mathbb{C}\{f\}$:
\[\mathcal{D}(\omega)\cong \mathbb{C}\{f\}^{\mu}.\]
\end{theorem}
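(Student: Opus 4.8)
The plan is to adapt Fran\c{c}oise's algorithmic proof of the Brieskorn--Deligne--Sebastiani theorem to the boundary complex $x\Omega^{\cdot}_H$, exploiting the quasihomogeneity of $(f,H)$ throughout. Let $E=w_1x\partial_x+w_2y\partial_y$ be the weighted Euler vector field of the quasihomogeneous structure, normalised so that $L_Ef=pf$ with $p=\deg f$; since $E$ is tangent to $H=\{x=0\}$, it preserves each of the complexes $x\Omega^{\cdot}_H$, the ideal $x\Omega^0_H$ and their natural grading by weighted degree. I would first fix a monomial basis $e_1,\dots,e_\mu$ of the boundary local algebra $\mathcal{Q}_{f,H}=\mathcal{O}/(xf_x,f_y)$, each $e_i$ quasihomogeneous, and take as candidate generators the $\mu$ Martinet 2-forms $\eta_i:=xe_i\,dx\wedge dy\in x\Omega^2$. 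The identification $x\Omega^2/J_{f,H}x\Omega^2\cong\mathcal{Q}_{f,H}$ (strip the coefficient of its $x$-factor), combined with Lemma \ref{l2}(a) which gives $df\wedge x\Omega^1_H=J_{f,H}x\Omega^2$, shows that the classes of the $\eta_i$ span $x\Omega^2$ modulo $df\wedge x\Omega^1_H$.

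The heart of the generation step is a Cartan identity for $E$. For any top form $\Theta=C\,dx\wedge dy$ the $3$-form $df\wedge\Theta$ vanishes on the plane, so $E\lrcorner(df\wedge\Theta)=0$; expanding by the Leibniz rule for interior multiplication yields the key relation $df\wedge(E\lrcorner\Theta)=(E\lrcorner df)\,\Theta=p\,f\,\Theta$. Applying the homotopy formula $L_E=d\,E\lrcorner+E\lrcorner\,d$ to a quasihomogeneous $\eta\in x\Omega^1_H$ of degree $\delta$ then gives
\[\delta\,df\wedge\eta=df\wedge d(E\lrcorner\eta)+p\,f\,d\eta,\]
and one checks directly that $E\lrcorner\eta\in x\Omega^0_H$ and $d\eta\in x\Omega^2$. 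Hence, modulo the denominator $df\wedge d(x\Omega^0_H)$, every element of $df\wedge x\Omega^1_H=J_{f,H}x\Omega^2$ is congruent to $(p/\delta)\,f\cdot d\eta$, that is, to $f$ times a Martinet 2-form of strictly smaller degree. This produces a terminating algorithm: write a quasihomogeneous $\omega\in x\Omega^2$ as $\sum_i\lambda_i\eta_i+df\wedge\eta^{(1)}$ with $\lambda_i\in\mathbb{C}$ and $\eta^{(1)}\in x\Omega^1_H$ (via the basis and Lemma \ref{l2}(a)), replace $df\wedge\eta^{(1)}$ by $f\cdot d\eta^{(1)}$ up to the denominator, and recurse on the lower-degree form $d\eta^{(1)}$. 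Since every form in $x\Omega^2$ has degree $\geq 2w_1+w_2>0$ and the degree drops by $p$ at each step, the recursion stops and gives $\omega\equiv\sum_i c_i(f)\eta_i$ with each $c_i(f)$ polynomial in $f$; for general analytic $\omega$ one sums over degrees, the $c_i$ becoming power series whose convergence follows from Fran\c{c}oise-type norm bounds (the homotopy operator $1/\delta$ and the division of Lemma \ref{l2} being bounded). This proves that $\mathcal{D}(\omega)$ is generated over $\mathbb{C}\{f\}$ by $\eta_1,\dots,\eta_\mu$.

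For freeness I would avoid a direct independence computation and instead compare Poincar\'e series across the degree-preserving surjection $\mathbb{C}\{f\}^\mu\twoheadrightarrow\mathcal{D}(\omega)$; as all graded pieces are finite dimensional, equality of Poincar\'e series forces injectivity. On the source, $P(\mathbb{C}\{f\}^\mu)(t)=(1-t^p)^{-1}\sum_i t^{\deg\eta_i}=(1-t^p)^{-1}t^{2w_1+w_2}P_{\mathcal{Q}}(t)$, where $P_{\mathcal{O}}$ and $P_{\mathcal{Q}}$ denote the Poincar\'e series of $\mathcal{O}$ and $\mathcal{Q}_{f,H}$. On the target I use the Remark following Lemma \ref{l2}: the operator $g\mapsto df\wedge dg$ has trivial kernel on $x\Omega^0_H$, so $\dim(df\wedge d(x\Omega^0_H))_\delta=\dim(x\Omega^0_H)_{\delta-p}$ in each degree, whence $P(\mathcal{D}(\omega))(t)=t^{2w_1+w_2}(1-t^{p-w_2})P_{\mathcal{O}}(t)$. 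Finally, since $(xf_x,f_y)$ is a quasihomogeneous regular sequence (its quotient $\mathcal{Q}_{f,H}$ is finite dimensional of rank $\mu$), the complete-intersection formula gives $P_{\mathcal{Q}}(t)=(1-t^p)(1-t^{p-w_2})P_{\mathcal{O}}(t)$, with $\deg(xf_x)=p$ and $\deg f_y=p-w_2$. Substituting shows the two series coincide, so the surjection is an isomorphism and $\mathcal{D}(\omega)\cong\mathbb{C}\{f\}^\mu$.

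I expect the main obstacle to be the generation step rather than the count: making the recursion precise so that every intermediate object genuinely remains inside the boundary complexes $x\Omega^0_H$ and $x\Omega^1_H$ (carefully tracking $x$-divisibilities and the tangency of $E$ to $H$), and then controlling the growth of the coefficients extracted at each degree in order to obtain convergent analytic series $c_i(f)$ in the non-graded case. The freeness, by contrast, reduces to the essentially routine fact that $(xf_x,f_y)$ is a complete intersection with the expected Koszul Poincar\'e polynomial.
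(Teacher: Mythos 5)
Your proposal is correct, and its first half is essentially the paper's own argument: the identity $\delta\, df\wedge\eta=df\wedge d(E\lrcorner\eta)+p f\, d\eta$ is just the homogeneous-component form of the division identity $df\wedge x\Omega^1_H=f(x\Omega^2)+df\wedge d(x\Omega^0_H)$ (Lemma \ref{divlem}, where the paper instead inverts $L_{E_f}$ on $\hat{x\Omega}^2$ to solve $L_{E_f}\theta=d\eta$), and the resulting recursion is exactly Fran\c{c}oise's algorithm as implemented in Section 3.1, with convergence deferred in both cases to Malgrange's privileged-neighborhoods theorem for the boundedness of the division section. Where you genuinely diverge is freeness: the paper shows $\mathcal{D}(\omega)$ is torsion-free over the discrete valuation ring $\mathbb{C}\{f\}$ (from $f\omega=df\wedge dh$ it extracts $\omega=df\wedge d\xi$ via relative de~Rham division and the quasihomogeneous decomposition) and then gets the rank from Nakayama applied to $\mathcal{D}(\omega)/f\mathcal{D}(\omega)\cong x\mathcal{Q}_{f,H}$, whereas you compare Poincar\'e series across the surjection $\mathbb{C}\{f\}^\mu\twoheadrightarrow\mathcal{D}(\omega)$ using the injectivity of $g\mapsto df\wedge dg$ on $x\Omega^0_H$ (the paper's Remark after Lemma \ref{l2}) and the Koszul formula for the complete intersection $(xf_x,f_y)$; your degree bookkeeping ($\deg(xf_x)=p$, $\deg f_y=p-w_2$, $x\Omega^0_H=x^2\mathcal{O}$, $x\Omega^2=x\mathcal{O}\,dx\wedge dy$) does check out and the two series coincide. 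Your route has the merit of delivering generation, independence and the count $\mu$ in a single graded computation that makes the complete-intersection structure of $J_{f,H}$ explicit; the paper's route is what one needs if one wants freeness stated directly in the analytic (rather than graded) category and it is the Nakayama step that produces the concrete monomial basis $xe_i\,dx\wedge dy$. Two small points you should make explicit: the denominator is stable under multiplication by $f$ (since $f\,df\wedge dg=df\wedge d(fg)$ and $fg\in x\Omega^0_H$), which is needed for your recursion to stay inside the module; and the passage from the graded isomorphism to the analytic one requires not only the convergence of the generation algorithm but also the (easy) remark that an analytic relation $\sum_i c_i(f)\eta_i=df\wedge dh$ splits into its quasihomogeneous components, so formal injectivity suffices.
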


To prove the theorem, suppose that $f$ is a quasihomogeneous polynomial of type $(m_1,m_2;1)$, $m_i\in \mathbb{Q}_+$, i.e. such that $f(t^{m_1}x,t^{m_2}y)=tf(x,y)$. Denote by 
\[E_f=m_1x\frac{\partial}{\partial x}+m_2y\frac{\partial }{\partial y}\]
the Euler vector field of $f$, i.e. such that $E_f(f)=f$. Write also $M=m_1+m_2$. Then the following division lemma holds:

\begin{lemma}
\label{divlem}
If $(f,H)$ is a quasihomogeneous boundary singularity at the origin of the plane, then the following identity holds:
\begin{equation}
\label{df}
df\wedge x\Omega^1_H=f(x\Omega^2)+df\wedge d(x\Omega^0_H).
\end{equation}
\end{lemma}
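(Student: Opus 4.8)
The plan is to prove the two inclusions in \eqref{df} separately, the substantial one being $df\wedge x\Omega^1_H\subseteq f(x\Omega^2)+df\wedge d(x\Omega^0_H)$. Since $f$ is quasihomogeneous, every space appearing in \eqref{df} is graded by the weighted degree induced by $E_f$ (the monomials $x^iy^j$ and the forms $dx$, $dy$ carry weights $im_1+jm_2$, $m_1$, $m_2$ respectively and $f$ has weight $1$), so it suffices to establish the inclusion on each quasihomogeneous component. First I would record, via Lemma \ref{l2}(a), that the left-hand side is exactly the module $J_{f,H}(x\Omega^2)=(x^2f_x,\,xf_y)\,dx\wedge dy$. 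A short weight count then shows that every nonzero quasihomogeneous element of this module has weighted degree at least $2m_1+1>1$, a fact that turns out to be essential below.

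The core of the argument is a homotopy-type identity built from the Euler field. Take a quasihomogeneous $\omega\in df\wedge x\Omega^1_H$ of weight $w$ and, using Lemma \ref{l2}(a), write $\omega=df\wedge\eta$ with $\eta\in x\Omega^1_H$ quasihomogeneous of weight $w-1$ (replace $\eta$ by its weight-$(w-1)$ component if necessary). Cartan's formula together with quasihomogeneity gives
\[(w-1)\eta=L_{E_f}\eta=d(E_f\lrcorner\eta)+E_f\lrcorner d\eta.\]
Wedging with $df$ and using that in dimension two $df\wedge d\eta$ is a $3$-form, hence $0=E_f\lrcorner(df\wedge d\eta)=f\,d\eta-df\wedge(E_f\lrcorner d\eta)$, I obtain
\[(w-1)\,\omega=f\,d\eta+df\wedge d(E_f\lrcorner\eta).\]

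It remains to check that the two terms on the right land in the correct spaces. Writing $\eta=xp\,dx+x^2q\,dy$ with $p,q\in\mathcal{O}$, a direct computation shows $d\eta\in x\Omega^2$ (the coefficient of $dx\wedge dy$ is divisible by $x$), so $f\,d\eta\in f(x\Omega^2)$; and $E_f\lrcorner\eta=x^2(m_1p+m_2qy)\in x\Omega^0_H$, so $df\wedge d(E_f\lrcorner\eta)\in df\wedge d(x\Omega^0_H)$. Since $w>1$ we may divide by $w-1$, which proves $\omega\in f(x\Omega^2)+df\wedge d(x\Omega^0_H)$ and hence the nontrivial inclusion. For the reverse inclusion I would use the Euler relation $f=m_1xf_x+m_2yf_y$: multiplying by $x$ gives $fx=m_1x^2f_x+m_2xy\,f_y\in J_{f,H}(x\Omega^2)$, so $f(x\Omega^2)\subseteq df\wedge x\Omega^1_H$, while $d(x\Omega^0_H)\subseteq x\Omega^1_H$ gives $df\wedge d(x\Omega^0_H)\subseteq df\wedge x\Omega^1_H$ at once.

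The main obstacle is conceptual rather than computational: because $d$ is not $\mathcal{O}$-linear, $df\wedge d(x\Omega^0_H)$ is only a $\mathbb{C}$-vector space and not a module, so one cannot verify the inclusion on module generators and must instead exploit the full quasihomogeneous grading. The identity above is precisely what converts the generator $\omega=df\wedge\eta$ into an admissible combination, and the one place it could fail is in weight $w=1$, where division by $w-1$ is impossible; the estimate $w\geq 2m_1+1>1$ is exactly what rules this out, and it is here that quasihomogeneity together with the boundary constraint (the forced factor of $x$) enters essentially.
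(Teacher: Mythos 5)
Your proof is correct and rests on the same mechanism as the paper's: contracting with the Euler vector field $E_f$, using $f\theta=df\wedge(E_f\lrcorner\theta)$, and inverting $L_{E_f}$ on quasihomogeneous components, where the positivity of the weights (your bound $w-1\geq 2m_1>0$) is exactly what makes the division admissible and also keeps the resummed series $\theta=\sum_w d\eta_w/(w-1)$, $h=\sum_w (E_f\lrcorner\eta_w)/(w-1)$ convergent, so the passage from graded components back to the analytic statement is harmless. The only real difference is packaging: you apply Cartan's formula to $\eta$ itself and read off both $\theta$ and the primitive $h$ in closed form, whereas the paper solves $L_{E_f}\theta=d\eta$ formally and then invokes the relative Poincar\'e lemma (Lemma \ref{l1}) to produce $h$; you also verify the easy reverse inclusion, which the paper leaves implicit.
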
   
\begin{proof} 
It suffices to find, for a given 1-form vanishing on the boundary $\eta \in x\Omega^1_H$, a 2-form $\theta \in x\Omega^2$ and a function $h \in x\Omega^0_H$ vanishing on the boundary to second order, such that
\[df\wedge \eta=f\theta +df\wedge dh.\]
But $f\theta=df\wedge(E_f\lrcorner \theta)$ and so the equality above reduces to 
\[df\wedge (\eta-E_f\lrcorner \theta -dh)=0,\]
i.e. to 
\[E_f\lrcorner \theta=\eta-dh.\]
Taking exterior differential, it suffices to find $\theta$ such that:
\[L_{E_f}\theta=d\eta.\]
We view $L_{E_f}$ as an operator in formal series:
\[L_{E_f}:\hat{x\Omega}^2\rightarrow \hat{x\Omega}^2, \hspace{0.2cm} L_{E_f}=m_1x\frac{\partial }{\partial x}+m_2y\frac{\partial }{\partial y}+M.\]
This is obviously an invertible operator since for any monomial $x^iy^j$, $b=(i,j)$, $i>0$ we have:
\[L_{E_f}x^iy^j=(<m,b>+M)x^iy^j,\]
where $<m,b>+M$ never vanishes. Thus given $\theta$ we can find a formal solution $\eta \in \hat{x\Omega}^1_H$. This solution can easily be extended to an analytic solution $\eta \in x\Omega^1_H$ in a fundamental system of neighborhoods of the origin (see Lemma \ref{rdl} below). The lemma is proved. 
\end{proof}

Now we describe the analog of Fran\c{c}oise's algorithm for the generation of the coefficients in the decomposition of $\omega$ in the deformation module.

\subsection{Construction of a Formal Basis of the Deformation Module} 

First we construct a formal basis of the $\mathbb{C}[[f]]$-module $\Hat{\mathcal{D}}(\omega)$ (i.e. of the formal deformation module): 

Choose a monomial basis $\{e_i(x,y)\}_{i=1}^{\mu}$ of the local algebra $\mathcal{Q}_{f,H}$ of the boundary singularity and lift it to a basis of monomial 2-forms $\omega_i=\{e_i(x,y)dx\wedge dy\}_{i=1}^{\mu}$ in $\Omega^2_{f,H}=\Omega^2/df\wedge \Omega^1_H$. Any 2-form $\omega \in x\Omega^2$ can be written as $\omega=x\tilde{\omega}$ for some 2-form $\tilde{\omega}$. Decompose now $\tilde{\omega} \in \hat{\Omega}^2$ in $\hat{\Omega}^2_{f,H}$:
\[\tilde{\omega}=\sum_{i=1}^{\mu}c_i\omega_i+df\wedge \tilde{\eta},\]
where $\tilde{\eta} \in \hat{\Omega}^1_H$ is a 1-form vanishing on $H$ (and defined uniquely by $\omega$ modulo terms of the form $gdf$) and $c_i \in \mathbb{C}$ for $i=1,...,\mu$. This decomposition induces also a decomposition, after multiplication with the function $x$, in the space $\hat{x\Omega}^2$, in the sense that:
\begin{equation}
\label{de11}
\omega=x\sum_{i=1}^{\mu}c_i\omega_i+df\wedge x\tilde{\eta}.
\end{equation}
Write now $\eta=x\tilde{\eta} \in \hat{x\Omega}^1_H$ and decompose the 2-form $df\wedge \eta$ according to the division Lemma \ref{divlem} and plug it to equation (\ref{de11}) above:
\begin{equation}
\label{de22}
\omega=x\sum_{i=1}^{\mu}c_i\omega_i+f\theta +df\wedge dh,
\end{equation}
where $\theta=x\tilde{\theta} \in \hat{x\Omega}^2$ and $h\in \hat{x\Omega}^0_H$. Continuing that way, decompose $\tilde{\theta}$ in $\hat{\Omega}^{2}_{f,H}$:
\[\tilde{\theta}=\sum_{i=1}^{\mu}c_i^1\omega_i+df\wedge \tilde{\eta_1},\]
where again $\tilde{\eta}_1 \in \hat{\Omega}^1_H$ (is defined by $\tilde{\theta}$ modulo terms $gdf$) and the $c_i^1 \in \mathbb{C}$ are constants. Multiplying by $x$ and plugging this back to equation (\ref{de22}) we obtain the new decomposition:
\begin{equation}
\label{de33}
\omega=x\sum_{i=1}^{\mu}(c_i+c_i^1f)\omega_i+fdf\wedge \eta_1+df\wedge dh,
\end{equation}
where $\eta_1=x\tilde{\eta_1} \in \hat{x\Omega}^1_H$. Now use again the division Lemma \ref{divlem} for the 2-form $df\wedge \eta_1$ to obtain new $\theta_1=x\tilde{\theta_1}\in \hat{x\Omega}^2$, $h_1\in \hat{x\Omega}^0_H$ such that:
\[df\wedge \eta_1=f\theta_1+df\wedge dh_1\]
 and plug it back to (\ref{de33}) above to get:
\[\omega=x\sum_{i=1}^{\mu}(c_i+c_i^1f)\omega_i+f^2\theta_1+df\wedge d(fh_1+h).\]
Continuing that way with the 2-form $\theta_1$, e.t.c. we obtain at the $p$-th iterate a decomposition of the form:
\[\omega=x\sum_{i=1}^{\mu}(c_i+c_i^1f+...+c_i^pf^p)\omega_i+df\wedge d(f^ph_p+...+fh_1+h)+o(f^{p+1}).\]
The term $o(f^{p+1})$ belongs, for $p\rightarrow \infty$, to the intersection of all maximal ideals $\cap_p\frak{m}^p$, i.e. it goes to zero in the $\mathbb{C}[[f]]$-module $\hat{\mathcal{D}(\omega)}$. Thus, the algorithm converges in the Krull topology.

\subsection{Proof of Convergence in the Analytic Category}

For convenience in notation, we change coordinates from $(x,y)$ to $x=(x_1,x_2)$ (so that $H=\{x_1=0\}$). We write in this notation $x^b=x_1^ix_2^j$ for a monomial $x^iy^j$ and a vector $b=(i,j)$ in $\mathbb{N}^2$.  Let $r=(r_1,r_2) \in \mathbb{R}^2_+$ and let 
\[D(r)=\{(x_1,x_2)\in \mathbb{C}^2/|x_1|\leq r_1, \hspace{0.2cm} |x_2|\leq r_2\}\]
be a polycylinder in $\mathbb{C}^2$. We consider the pseudo-norm $|.|_r$ in $\mathcal{O}$ defined by:
\[|\phi|_r=\sum_b |\phi_b|r^b,\]
where $\phi=\sum_{b}\phi_bx^b$ is an analytic function. We denote by $\mathcal{O}_r$ the subset of $\mathcal{O}$ for which the pseudo-norm $|.|_r$ is finite and thus defines a norm.  
For $\phi=(\phi_1,...,\phi_k)\in \mathcal{O}_r^k$ we have accordingly:
\[|\phi|_r=\sum_{i=1}^k|\phi_i|_r.\]

Identify now $\Omega^2_r$ with $\mathcal{O}_r$ and $\Omega^1_r$ with $\mathcal{O}_r\times \mathcal{O}_r$. Obviously $(\Omega^1_{H})_r$ can be identified with the subspace $\mathcal{O}_r\times (x\mathcal{O})_r$ and $(x\Omega^1_H)_r$ with $(x\mathcal{O})_r\times (x^2\mathcal{O})_r$.  The map $u=df\wedge :x\Omega^1_{H}\rightarrow x\Omega^2$ is a $\mathcal{O}$-linear map and induces a map:
\[u:(x\Omega^1_H)_r\rightarrow (x\Omega^2)_r.\]
A section of $u$ is a $\mathbb{C}$-linear map:
\[\lambda : (x\Omega^2)_r\rightarrow (x\Omega^1_{H})_r,\]
such that  $u=u\lambda u$. We can see from the definition that $\lambda=\frac{.}{df}$ is division with $df$. 

We say that a section $\lambda$ is adapted to the polydisc $D(r)$ (or to $r$) if $\lambda$ is a continuous mapping between Banach spaces, i.e. there exists a constant $C_r$ such that:
\[|\lambda (\theta)|_r\leq C_r|\theta|_r,\]
for all $\theta \in (x\Omega^2)_r$.  A consequence of Malgrange's priviledged neighborhoods theorem is the following:
\begin{proposition}[\cite{Mal1}]
Given $u$ there is a section $\lambda$ such that the set of polydiscs $D(r)$ onto which is adapted, forms a fundamental system of neighborhoods of the origin.
\end{proposition}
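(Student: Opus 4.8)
The plan is to read this statement as a direct application of Malgrange's theorem on privileged neighbourhoods to the morphism $u=df\wedge$, so that essentially no new hard analysis has to be carried out by hand; the work is in checking that $u$ fits the coherent framework in which that theorem applies. First I would recast $u$ algebraically. After the identifications made above, $u\colon x\Omega^1_H\to x\Omega^2$ is an $\mathcal{O}$-linear map of finite free $\mathcal{O}$-modules, given concretely by wedging with $df$ (a matrix built from the partial derivatives of $f$). By Lemma \ref{l2}(a) its image is the submodule $J_{f,H}\,x\Omega^2\subset x\Omega^2$, and by Lemma \ref{l2}(b) its kernel consists exactly of the multiples $g\,df$ with $g\in x\Omega^0_H$, hence is finitely generated and coherent by Oka's theorem (the isolated singularity of $f$ enters through Lemma \ref{l2}, which guarantees these division statements). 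Thus $u$ is a morphism of coherent analytic sheaves, which is precisely the setting of Malgrange's theorem.

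Second, I would invoke the privileged neighbourhoods theorem of \cite{Mal1}. It furnishes a fundamental system of polydiscs $D(r)$ that are privileged with respect to $u$; for each such $r$ the induced Banach-space map $u_r\colon (x\Omega^1_H)_r\to (x\Omega^2)_r$ has closed image and admits a continuous $\mathbb{C}$-linear right inverse on that image, together with a bounded linear projection of $(x\Omega^2)_r$ onto the image. Composing these gives a bounded $\mathbb{C}$-linear map $\lambda_r$ with $u_r\lambda_r u_r=u_r$ and an estimate $|\lambda_r(\theta)|_r\le C_r|\theta|_r$, which is exactly the assertion that $\lambda_r$ is adapted to $D(r)$. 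Concretely $\lambda_r$ is a bounded realisation of the division operator $\theta\mapsto \theta/df$, the ambiguity in its choice reflecting the nontrivial kernel $g\,df$ identified above.

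The hard part is to produce a \emph{single} section $\lambda$, defined at the level of germs, whose restriction to $(x\Omega^2)_r$ is adapted for every $r$ in a cofinal (shrinking) family of privileged polydiscs, rather than an $r$-dependent collection $\{\lambda_r\}$. For this I would use that the division germ $\theta\mapsto \theta/df$ and the coherent structure of $\mathrm{ker}\,u$ are independent of $r$, so that Malgrange's construction can be carried out compatibly with the restriction maps $D(r')\subset D(r)$; passing to the limit over the resulting directed family of privileged polydiscs yields one germ-level $\lambda$ adapted on all of them. The genuinely nontrivial input throughout---that $\mathrm{Im}(u_r)$ is closed in $(x\Omega^2)_r$ and that division by $df$ is continuous with constants $C_r$ that remain controlled as $r\to 0$---is the full analytic content of the privileged neighbourhoods theorem, so I would cite \cite{Mal1} for it rather than reprove it, verifying only that the coherence hypotheses hold.
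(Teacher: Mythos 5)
Your proposal is correct and takes essentially the same route as the paper, which offers no argument beyond the citation of Malgrange's privileged neighbourhoods theorem applied to the $\mathcal{O}$-linear map $u=df\wedge\colon x\Omega^1_H\to x\Omega^2$ between free $\mathcal{O}$-modules. Your additional verifications (coherence of image and kernel via Lemma \ref{l2}, and the compatibility needed to get a single germ-level section adapted on a cofinal family of polydiscs) are exactly the hypotheses the paper implicitly leaves to \cite{Mal1}.
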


Now we continue the proof by making precise the choise of the section. We will use the following two lemmata, whose proofs are exactly the same as in \cite{F2}. The first concerns the bounds obtained by division with $df$ and the second the corresponding bounds obtained by the relative Poincar\'e lemma.

\begin{lemma}
\label{rdl}
If $\theta \in (x\Omega^2)_r$ is such that $L_{E_{f}}\theta=d\eta$, then:
\[|\theta|_r\leq \frac{1}{m_0r_0}|\eta|_r,\]
where $m_0=min(m_1,m_2)$ and $r_0=min(r_1,r_2)$.
\end{lemma}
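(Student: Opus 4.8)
The plan is to reduce everything to an estimate on Taylor coefficients, exploiting that $L_{E_f}$ is diagonal in the monomial basis. Writing $\theta=\theta_0\,dx_1\wedge dx_2$ with $\theta_0=\sum_{i\ge 1,\,j\ge 0}\theta_{ij}x_1^ix_2^j$ (the condition $\theta\in x\Omega^2$ forcing $i\ge 1$), the formula for $L_{E_f}$ recalled before the lemma gives $L_{E_f}\theta=\sum_{i,j}\big(m_1 i+m_2 j+M\big)\theta_{ij}x_1^ix_2^j\,dx_1\wedge dx_2$, and I would rewrite the eigenvalue in the convenient form $m_1 i+m_2 j+M=m_1(i+1)+m_2(j+1)$. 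Decomposing $\eta=\eta_1\,dx_1+\eta_2\,dx_2\in x\Omega^1_H$ into components, a direct computation of $d\eta=(\partial_{x_1}\eta_2-\partial_{x_2}\eta_1)\,dx_1\wedge dx_2$ shows that the coefficient of $x_1^ix_2^j$ equals $(i+1)(\eta_2)_{i+1,j}-(j+1)(\eta_1)_{i,j+1}$. Matching coefficients in $L_{E_f}\theta=d\eta$ then yields the explicit formula
\[\theta_{ij}=\frac{(i+1)(\eta_2)_{i+1,j}-(j+1)(\eta_1)_{i,j+1}}{m_1(i+1)+m_2(j+1)}.\]

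The heart of the argument is the elementary observation that the derivative factors produced by $d\eta$ cancel against the eigenvalue denominators: since $m_1(i+1)+m_2(j+1)\ge m_1(i+1)$ and $\ge m_2(j+1)$, we get $\tfrac{i+1}{m_1(i+1)+m_2(j+1)}\le\tfrac{1}{m_1}\le\tfrac1{m_0}$ and likewise $\tfrac{j+1}{m_1(i+1)+m_2(j+1)}\le\tfrac{1}{m_2}\le\tfrac1{m_0}$, with $m_0=\min(m_1,m_2)$. Hence $|\theta_{ij}|\le \tfrac1{m_0}\big(|(\eta_2)_{i+1,j}|+|(\eta_1)_{i,j+1}|\big)$.

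It remains to assemble the pseudo-norm. Multiplying by $r_1^ir_2^j$ and summing, I would reindex each of the two resulting sums by the shift that matches the subscripts of $\eta_1$ and $\eta_2$: writing $r_1^ir_2^j=r_1^{-1}(r_1^{i+1}r_2^j)$ in the $\eta_2$-sum and $r_1^ir_2^j=r_2^{-1}(r_1^{i}r_2^{j+1})$ in the $\eta_1$-sum gives $\sum|(\eta_2)_{i+1,j}|r_1^ir_2^j\le r_1^{-1}|\eta_2|_r$ and $\sum|(\eta_1)_{i,j+1}|r_1^ir_2^j\le r_2^{-1}|\eta_1|_r$, since each is a subsum of the full norm. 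Bounding $r_1^{-1},r_2^{-1}\le r_0^{-1}$ and using $|\eta|_r=|\eta_1|_r+|\eta_2|_r$ from the chosen norm on $\Omega^1_r$, this collapses to $|\theta|_r\le\tfrac1{m_0r_0}|\eta|_r$, as claimed.

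The only genuine subtlety — and the reason the inequality is stated with $|\eta|_r$ rather than with the norm of $d\eta$ — is precisely this cancellation: a naive inversion of $L_{E_f}$ would bound $|\theta|_r$ by $|d\eta|_r$, which involves uncontrolled first derivatives of $\eta$, whereas the fact that the weights enter the eigenvalues in exactly the combination $m_1(i+1)+m_2(j+1)$ is what absorbs the factors $(i+1)$, $(j+1)$ and keeps the estimate purely in terms of $\eta$ itself. Beyond this point the computation is routine and matches the corresponding estimate in Fran\c{c}oise's treatment of the non-boundary case, the boundary hypothesis entering only through the restriction $i\ge 1$, which plays no role in the bound.
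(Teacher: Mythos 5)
Your proof is correct and follows essentially the same route as the paper, which simply writes down the explicit coefficient formula $\theta=\sum_{i=1}^2\sum_b\frac{b_i}{\langle m,b\rangle+M-m_i}\eta^i_bx^{b-I_i}$ (after Fran\c{c}oise, Lemma 3.1.2) and asserts that the bound follows; your observation that the eigenvalue rewrites as $m_1(i+1)+m_2(j+1)$ so that the derivative factors cancel, together with the reindexing that produces the $r_0^{-1}$, is exactly the content of that assertion. No gaps.
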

\begin{proof}
It is exactly the same as in \cite{F2}, Lemma 3.1.2. For completeness, write $\eta=\eta_2dx_1+\eta_1dx_2$ with $\eta_i=\sum_b\eta^i_bx^b$, $i=1,2$, where, since $\eta$ vanishes on $H=\{x_1=0\}$, the vector $b$ in $\eta_1$ is of the form $b=(b_1,b_2)$, $b_1\geq 1$. Then by direct computation: 
\[\theta=\sum_{i=1}^2\sum_b\frac{b_i}{<m,b>+M-m_i}\eta^i_bx^{b-I_i},\]
where $I_1=(1,0)$, $I_2=(0,1)$ are unit vectors in $\mathbb{N}^2$. From this the lemma follows. 
\end{proof}
\begin{lemma}
\label{rpl}
For any closed 1-form $\pi \in (x\Omega^1_H)_r$, there exists a function $\zeta \in (x\Omega^{0}_{H})_r$ such that: 
\[\pi=d\zeta, \hspace{0.2cm} |\zeta|_r\leq R|\pi|_r,\]
where $R=r_1+r_2$.
\end{lemma}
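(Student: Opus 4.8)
The plan is to prove Lemma~\ref{rpl} by reusing the explicit homotopy operator from the proof of Lemma~\ref{l1}, but now tracking the norm $|\cdot|_r$ term by term. Recall that for a closed form $\pi \in x\Omega^1_H$ the primitive was constructed as $\zeta = \int_0^1 F_t^*(V_t\lrcorner \pi)\,dt$, where $F_t(x_1,x_2)=(tx_1,tx_2)$ and $V_t=(x_1,x_2)$ is the radial (Euler-type) vector field. The content of the present lemma is purely quantitative: I must show that this integral operator is bounded on $\mathcal{O}_r$ with operator norm at most $R=r_1+r_2$, and that $\zeta$ indeed lands in $(x\Omega^0_H)_r$, i.e. vanishes on $H=\{x_1=0\}$.

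First I would write $\pi=\pi_1\,dx_1+\pi_2\,dx_2$ with $\pi_i=\sum_b \pi^i_b x^b$, where, since $\pi\in x\Omega^1_H$, the coefficient $\pi_1$ lies in $x_1\mathcal{O}$ (its monomials have $b_1\geq 1$) and $\pi_2$ lies in $x_1^2\mathcal{O}$ (monomials with $b_1\geq 2$). Then $V_t\lrcorner \pi = t(x_1\pi_1+x_2\pi_2)$, and applying $F_t^*$ multiplies a monomial $x^b$ by $t^{|b|}$ where $|b|=b_1+b_2$. Carrying out the $t$-integration monomial by monomial yields the explicit formula
\begin{equation}
\label{zetaexplicit}
\zeta=\sum_b \frac{\pi^1_b}{|b|+1}x^{b+I_1}+\sum_b \frac{\pi^2_b}{|b|+1}x^{b+I_2},
\end{equation}
with $I_1=(1,0)$, $I_2=(0,1)$ as in Lemma~\ref{rdl}. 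From this one reads off both conclusions: every monomial appearing in $\zeta$ is divisible by $x_1$ (since those in $\pi_1$ already have $b_1\geq 1$ and those coming from $\pi_2$ acquire a factor $x_2$ but retain $b_1\geq 2$), so $\zeta\in x\Omega^0_H$; and $d\zeta=\pi$ follows from closedness of $\pi$ exactly as in Lemma~\ref{l1}.

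For the norm bound I would estimate \eqref{zetaexplicit} directly: since $\frac{1}{|b|+1}\leq 1$ and $|x^{b+I_i}|_r = r^b\, r_i$, each term contributes at most $|\pi^i_b|\, r^b\, r_i$, so summing gives $|\zeta|_r \leq r_1|\pi_1|_r + r_2|\pi_2|_r \leq (r_1+r_2)\,|\pi|_r = R\,|\pi|_r$. The only step requiring care is the bookkeeping that the denominator $|b|+1$ is genuinely bounded below by $1$ (it is, since $|b|\geq 0$), which is what makes the homotopy operator contracting rather than merely bounded; the vanishing-on-$H$ claim is automatic from the degree shift $b\mapsto b+I_i$. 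I do not expect a serious obstacle here — as the paper notes, the argument is formally identical to \cite{F2}; the essential point is simply that the radial homotopy commutes with the boundary $H$ because $V_t$ is tangent to $H$, which was already verified in Lemma~\ref{l1}.
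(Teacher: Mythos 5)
Your proposal is correct and follows the paper's own route exactly: the paper likewise takes $\zeta=\int_0^1 F_t^*(E\lrcorner\pi)\,dt$ from the homotopy operator of Lemma~\ref{l1} and leaves the bound to ``a direct computation'', which you have simply carried out monomial by monomial, correctly obtaining denominators bounded below by $1$ and the degree shift $b\mapsto b+I_i$ that forces $b_1\geq 2$, i.e. $\zeta\in x\Omega^0_H$. (One cosmetic slip: the extra factor of $t$ you write in $V_t\lrcorner\pi$ combined with the $t^{|b|}$ from $F_t^*$ would give denominators $|b|+2$ rather than the $|b|+1$ in your displayed formula; since every such denominator is $\geq 1$, the estimate $|\zeta|_r\leq R|\pi|_r$ is unaffected.)
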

\begin{proof}
It follows by the the proof of the relative Poincar\'e Lemma \ref{l1}.  Indeed, the function $\zeta$ is defined by $\zeta=\int_0^1F_t^*(E\lrcorner \pi)dt$ where $F_t(x)=(tx_1,tx_2)$ is a contraction mapping at the origin and $E=dF_t/dt$ is the Euler vector field: $E=x_1\partial /\partial x_1+x_2\partial /\partial x_2$. The bound on the norm follows then by a direct computation. 
\end{proof}

Now, the finiteness of the deformation module in the analytic category may be stated in the following form:

\begin{proposition}
Let $D(r)$ be a polycylinder onto which the section $\lambda$ is adapted. Then there exists a smaller polycylinder $D(r')\subset D(r)$ such that for any $\omega \in (x\Omega^2)_r$ there exist an analytic function $\xi \in(x\Omega^0_H)_{r'}$ and $\mu$ analytic functions $c_i(f)\in \Omega^0_r$ such that:
\[\omega=x\sum_{i=1}^{\mu}c_i(f)\omega_i+df\wedge d\xi,\]
with the following explicit bounds:
\[|h|_{r'}\leq \frac{RC_r(1+\frac{MR}{m_0r_0})}{1-|f|_{r'}\frac{C_r}{m_0r_0}}|\omega|_r,\]
\[|c_i(f)|_{r'}\leq \frac{|\omega|_r}{1-|f|_{r'}\frac{C_r}{m_0r_0}}.\]
\end{proposition}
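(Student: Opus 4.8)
The plan is to rerun the formal algorithm of the previous subsection, but now tracking the pseudo-norm $|\cdot|_r$ at each step so that the two formal series $\sum_k c_i^{(k)}f^k$ and $\sum_k f^k h^{(k)}$ produced there are exhibited as convergent on a suitable smaller polydisc $D(r')\subset D(r)$. The whole argument is a contraction (geometric-series) estimate. The key organising idea is to fix, once and for all, the radius $r$ on which Malgrange's privileged neighbourhoods theorem (the preceding proposition) furnishes a bounded section $\lambda$ of $u=df\wedge\cdot$, with $|\lambda(\theta)|_r\le C_r|\theta|_r$; I then carry out every per-iteration estimate at this \emph{fixed} $r$ and only at the end shrink to $r'$ so as to make $|f|_{r'}$ small. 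The whole point is that the contraction constant produced by one iteration depends only on $r$ (through $C_r,m_0,r_0,M,R$) and not on $r'$, so shrinking in $f$ alone is what closes the series.

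First I would isolate the estimates attached to a single iteration. Writing $\omega=x\sum_i c_i^{(0)}\omega_i+df\wedge\eta^{(0)}$ via the adapted section gives $|\eta^{(0)}|_r\le C_r|\omega|_r$, together with $|c_i^{(0)}|\le|\omega|_r$ for the monomial coefficients. Decomposing $df\wedge\eta^{(0)}$ by the division Lemma \ref{divlem} amounts to solving $L_{E_f}\theta^{(0)}=d\eta^{(0)}$ and then integrating the closed form $\eta^{(0)}-E_f\lrcorner\theta^{(0)}=dh^{(0)}$. Lemma \ref{rdl} yields $|\theta^{(0)}|_r\le\frac{1}{m_0r_0}|\eta^{(0)}|_r$, and since a direct computation gives $|E_f\lrcorner\theta^{(0)}|_r\le MR\,|\theta^{(0)}|_r$, Lemma \ref{rpl} yields
\[|h^{(0)}|_r\le R\bigl(|\eta^{(0)}|_r+|E_f\lrcorner\theta^{(0)}|_r\bigr)\le R\Bigl(1+\tfrac{MR}{m_0r_0}\Bigr)|\eta^{(0)}|_r.\]
Thus one iteration sends $\omega$ to a remainder $\theta^{(0)}$ with $|\theta^{(0)}|_r\le q\,|\omega|_r$, where $q:=\frac{C_r}{m_0r_0}$ is the universal contraction factor, while producing $|c_i^{(0)}|\le|\omega|_r$ and $|h^{(0)}|_r\le RC_r(1+\frac{MR}{m_0r_0})|\omega|_r$.

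Next I would iterate and sum. Since the $k$-th step ($k\ge0$) is applied to the remainder $\theta^{(k-1)}$ (with $\theta^{(-1)}=\omega$), which by the algorithm re-enters multiplied by $f^k$, induction gives $|\theta^{(k-1)}|_r\le q^{k}|\omega|_r$, hence $|c_i^{(k)}|\le q^{k}|\omega|_r$ and $|h^{(k)}|_r\le RC_r(1+\frac{MR}{m_0r_0})\,q^{k}|\omega|_r$. Passing to $r'$ via the monotonicity $|\cdot|_{r'}\le|\cdot|_r$ and the submultiplicativity $|f^{k}|_{r'}\le|f|_{r'}^{k}$ of the pseudo-norm, and choosing $r'$ so small that $|f|_{r'}\,q<1$ (possible since $|f|_{r'}\to0$ as $r'\to0$ while $q$ stays fixed), the series $\xi=\sum_k f^k h^{(k)}$ and $c_i(f)=\sum_k c_i^{(k)}f^k$ are majorised by convergent geometric series:
\[|\xi|_{r'}\le RC_r\Bigl(1+\tfrac{MR}{m_0r_0}\Bigr)|\omega|_r\sum_{k\ge0}\bigl(|f|_{r'}q\bigr)^k=\frac{RC_r(1+\frac{MR}{m_0r_0})}{1-|f|_{r'}\frac{C_r}{m_0r_0}}|\omega|_r,\]
\[|c_i(f)|_{r'}\le|\omega|_r\sum_{k\ge0}\bigl(|f|_{r'}q\bigr)^k=\frac{|\omega|_r}{1-|f|_{r'}\frac{C_r}{m_0r_0}}.\]
The remainder $f^{p+1}\theta^{(p)}$ satisfies $|f^{p+1}\theta^{(p)}|_{r'}\le(|f|_{r'}q)^{p+1}|\omega|_r\to0$, so $\omega=x\sum_i c_i(f)\omega_i+df\wedge d\xi$ holds as an identity of convergent forms on $D(r')$, with the asserted bounds.

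The step I expect to be the main obstacle is controlling the \emph{constant} coefficients, i.e. establishing $|c_i^{(k)}|\le|\theta^{(k-1)}|_r$: this requires that the projection $x\Omega^2\to x\,\mathrm{span}_{\mathbb{C}}\{\omega_i\}$ complementary to $\mathrm{im}(u)=J_{f,H}x\Omega^2$ (Lemma \ref{l2}(a)) be bounded with constant $1$ in the $|\cdot|_r$ norm, a normalisation that must be built into the choice of monomial basis and of the section $\lambda$ coming from Malgrange's theorem. The second delicate point, flagged above, is the bookkeeping that keeps $q$ independent of $r'$: Lemmas \ref{rdl} and \ref{rpl} and the section bound must all be invoked at the fixed radius $r$, with only the powers of $f$ allowed to shrink; otherwise the constant $C_r$ could degenerate as $r'\to0$ and the geometric series would fail to close.
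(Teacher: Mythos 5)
Your proposal is correct and follows essentially the same route as the paper: the first decomposition via the adapted section with constant $C_r$, the bounds from Lemmas \ref{rdl} and \ref{rpl} giving the per-iteration contraction factor $q=C_r/(m_0r_0)$ at the fixed radius $r$, and the final shrinking to $r'$ with $|f|_{r'}q<1$ to sum the geometric series. Your closing remark about bounding the constant coefficients $|c_i^{(k)}|$ by $|\theta^{(k-1)}|_r$ flags a normalisation the paper also takes for granted, but it does not change the argument.
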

\begin{proof}
The proof is again the same as in the ordinary case \cite{F2} (Theorem 3.2). We present it for completeness. Start with the first decomposition
\[\omega=\sum_{i=1}^{\mu}c_i\omega_i+df\wedge \eta,\]
with the bound $|\eta|_r\leq C_r|\omega|_r$. Then we solve the equation $d\eta=L_{E_f}\theta$ and we find a $\theta$ such that, according to Lemma \ref{rdl}:
\[|\theta|_r\leq \frac{1}{m_0r_0}|\eta|_r\leq \frac{C_r}{m_0r_0}|\omega|_r.\]
Then by the relation $dh=E_f\lrcorner \theta -\eta$ and Lemma \ref{rpl} we obtain the bounds:
\[|h|_r\leq R(1+\frac{MR}{m_0r_0})C_r|\omega|_r.\]
Decomposing this way we obtain at the $p$-th iterate:
\[|\theta_p|_r\leq (\frac{C_r}{m_0r_0})^p|\omega|_r,\]
\[|h_p|_r\leq RC_r(1+\frac{MR}{m_0r_0})(\frac{C_r}{m_0r_0})^{p-1}|\omega|_r.\]
Choose now $r'$ such that $|f|_{r'}(C_r/m_0r_0)<1$. Then, for the term $df\wedge d(\sum_{i=0}^pf^{i}h_{i})$ (where $h_0=h$) we have the bounds:
\[|\sum_{i=0}^pf^ih_{i}|_{r'}\leq \sum_{i=0}^p|f|_{r'}^i|h_{i}|_{r'}\leq \sum_{i=0}^p|f|_{r'}^i|h_{i}|_r\]
and for the term $\sum_{i=1}^{\mu}(\sum_{j=0}^pc_i^jf^{j})\omega_i$ (where $c_i^0=c_i$) we have the corresponding bounds:
\[|\sum_{j=0}^{p}c_i^jf^j|_{r'}\leq \sum_{j=0}^p|c_i^j|_{r}|f|_{r'}.\]
From this the theorem follows.   
\end{proof}

\subsection{Proof of Freeness of the Deformation Module}

Again as in \cite{F2}, it suffices to show that the $\mathbb{C}\{f\}$-module $\mathcal{D}(\omega)$ is torsion free. Suppose then that there exists a function germ $h\in x\Omega^0_H$ such that $f\omega=df\wedge dh$ for some 2-form $\omega \in x\Omega^2$. We will need to show that there exists an analytic function $\xi \in x\Omega^0_H$ such that $\omega=df\wedge d\xi$. But by assumption, we have that $df\wedge (E_f\lrcorner \omega -dh)=0$ and thus, by the relative de-Rham division lemma, there exists a germ $g\in x\Omega^0_{H}$ such that $E_f\lrcorner \omega -dh=gdf$. Taking the exterior differential, this relation reads:
\[L_{E_f}\omega=df\wedge d(-g).\]
Take now quasihomogeneous decomposition of the form $\omega$, $\omega=\sum_k\omega_k$. For any $k$ we have
\[\omega_k=df\wedge \frac{-(dg)_{k-1+M}}{k+M},\]
and since the Lie derivative commutes with the differential, we obtain the existence of an analytic function:
\[\xi=\sum_k \frac{-g_{k-1+M}}{k+M},\]
such that $\omega=df\wedge d\xi$. Obviously $\xi \in x\Omega^0_H$ and freeness is proved.

\subsection{Choice of a Basis}

To construct a basis of $\mathcal{D}(\omega)$ we consider the $\mathbb{C}\{f\}$-module $F=df\wedge x\Omega^1_H/df\wedge d(x\Omega^0_H)$.  We have a natural inclusion of $\mathbb{C}\{f\}$-modules $F\subset \mathcal{D}(\omega)$. Multiplication by $f$ in $\mathcal{D}(\omega)$ gives obviously elements inside $F$ and so $f\mathcal{D}(\omega)\subset F$. On the other hand, by the quasihomogeneous division with $df$ (Lemma \ref{divlem}) we obtain that the class of any 2-form of the form $df\wedge \eta$, $\eta \in x\Omega^1_H$, can be represented by the class of a 2-form $f\theta$, $\theta \in x\Omega^2$. From this it follows that: 
\[f\mathcal{D}(\omega)=F.\]
Thus we obtain a sequence of isomorphisms of $\mathbb{C}$-vector spaces:
\[\mathcal{D}(\omega)/f\mathcal{D}(\omega)\cong \mathcal{D}(\omega)/F\cong x\Omega^2_{f,H} \cong x\mathcal{Q}_{f,H}\]
which is again a $\mu$-dimensional vector space. Thus, by Nakayama lemma a basis of monomials $e_i(x,y)$, $i=1,...,\mu$ of the local algebra $\mathcal{Q}_{f,H}$ lifts to a basis $xe_i(x,y)dx\wedge dy$ in the deformation module $\mathcal{D}(\omega)$.

\section{Local Normal Forms and Functional Invariants}

The following theorem is the analog of a theorem of Fran\c{c}oise \cite{F1} in the ordinary (symplectic, or volume-preserving) case. It concerns the local normal forms of Martinet 2-forms under the action of the boundary-singularity preserving diffeomorphism group $\mathcal{R}_{f,H}$.

\begin{theorem}
\label{t2}
Let $(f,H)$ be a quasi-homogeneous boundary singularity of finite multiplicity $\mu$. Then, for any germ of a Martinet 2-form $\omega$  at the origin, there exist $\mu$ analytic functions $c_i\in \mathbb{C}\{t\}$ and a diffeomorphism $\Phi \in \mathcal{R}_{f,H}$, such that $\omega$ is reduced to the normal form:
\[\Phi^*\omega=x\sum_{i=1}^\mu c_i(f)e_i(x,y)dx\wedge dy,\]
where the classes of the monomials $e_i(x,y)$ form a basis of the local algebra $\mathcal{Q}_{f,H}$. Moreover, the $\mu$ functions $c_i$ are uniquely determined by the pair $(f,\omega)$.
\end{theorem}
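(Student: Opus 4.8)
The plan is to combine the freeness of the deformation module (Theorem \ref{t1}) with the infinitesimal rigidity of Proposition \ref{p1}: existence of the normal form is then immediate, and the substance of the argument lies in the uniqueness of the functions $c_i$. For existence I would first invoke the analytic finiteness result of Section 3 (the convergence Proposition): after passing to a sufficiently small polycylinder, any Martinet 2-form $\omega$ admits a decomposition
\[\omega = x\sum_{i=1}^\mu c_i(f)\,e_i(x,y)\,dx\wedge dy + df\wedge d\xi,\]
with $c_i \in \mathbb{C}\{t\}$ and $\xi \in x\Omega^0_H$. Setting $\omega' := x\sum_{i=1}^\mu c_i(f)e_i\,dx\wedge dy$, which is again a Martinet form with zero set exactly $H$, the difference $\omega-\omega' = df\wedge d\xi$ lies in $df\wedge d(x\Omega^0_H)$. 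Proposition \ref{p1} then supplies a diffeomorphism carrying one form to the other; since $\mathcal{R}_{f,H}$ is closed under inversion, after replacing it by its inverse I obtain $\Phi \in \mathcal{R}_{f,H}$ with $\Phi^*\omega = \omega'$, the asserted normal form.

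For uniqueness, the idea is that the $c_i$ are precisely the coordinates of the class $[\omega]$ in the free $\mathbb{C}\{f\}$-module $\mathcal{D}(\omega)=x\Omega^2/df\wedge d(x\Omega^0_H)$, relative to the basis $\{[xe_i\,dx\wedge dy]\}$ furnished by Theorem \ref{t1}. Freeness makes these coordinates unique for a given form. What remains to be shown is that this class is an invariant of the $\mathcal{R}_{f,H}$-orbit, i.e. that $[\Phi^*\omega]=[\omega]$ for every $\Phi \in \mathcal{R}_{f,H}$ (in the identity component); otherwise two distinct normalizing diffeomorphisms could a priori yield different coefficients.

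I would establish this invariance by the homotopy argument underlying Lemma \ref{l3}. Connecting $\Phi$ to the identity by a path $\Phi_t \in \mathcal{R}_{f,H}$, $\Phi_0=\mathrm{id}$, $\Phi_1=\Phi$, with generators $v_t \in \mathfrak{r}_{f,H}$, one has $\tfrac{d}{dt}\Phi_t^*\omega = \Phi_t^*(L_{v_t}\omega)$, and Lemma \ref{l3} gives $L_{v_t}\omega \in df\wedge d(x\Omega^0_H)$. Since each $\Phi_t$ fixes $f$ (so $\Phi_t^*df = df$) and preserves $H$ (so $\Phi_t^*$ maps the ideal $x\Omega^0_H=(x^2)$ into itself), the subspace $df\wedge d(x\Omega^0_H)$ is $\Phi_t^*$-invariant. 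Integrating over $t$ therefore yields
\[\Phi^*\omega-\omega = \int_0^1 \Phi_t^*(L_{v_t}\omega)\,dt \in df\wedge d(x\Omega^0_H),\]
so that $[\Phi^*\omega]=[\omega]$ in $\mathcal{D}(\omega)$, which together with freeness forces the coefficients to coincide.

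I expect the uniqueness claim — specifically the orbit-invariance of the class $[\omega]$ — to be the main obstacle, since existence drops out at once from the finiteness and homotopy results already in hand. The delicate points will be making the connectedness of $\mathcal{R}_{f,H}$ to the identity precise (note that the diffeomorphisms produced by Proposition \ref{p1} are time-one flows, so this is automatic for them) and checking that the integration of the family $\Phi_t^*(L_{v_t}\omega)$ remains inside the closed subspace $df\wedge d(x\Omega^0_H)$, which reduces to the facts that $x\Omega^0_H$ is a closed ideal and that $d$ and exterior multiplication by $df$ commute with the integral.
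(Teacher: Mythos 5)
Your proposal reproduces the paper's proof in both structure and substance: existence via the analytic finiteness/decomposition result combined with the homotopy method of Proposition \ref{p1}, and uniqueness by showing that the class of $\omega$ in the free module $\mathcal{D}(\omega)$ is an invariant of the $\mathcal{R}_{f,H}$-action, established by interpolating $\Phi$ to the identity and integrating $\Phi_t^*(L_{v_t}\omega)\in df\wedge d(x\Omega^0_H)$ (this is exactly the paper's Lemma \ref{l0}). The one point where the paper differs from your sketch is precisely the connectedness issue you flag: rather than producing an analytic path in $\mathcal{R}_{f,H}$, it interpolates $\Phi$ by a \emph{formal} one-parameter subgroup of $\hat{\mathcal{R}}_{f,H}$ (citing Fran\c{c}oise for its existence for an arbitrary $\Phi$), obtains $\omega-\Phi^*\omega=df\wedge d\hat{h}$ formally, and then compares the analytic decomposition $\omega-\Phi^*\omega=\sum_i\psi_i(f)\omega_i+df\wedge dh$ with its image in the formal module $\hat{\mathcal{D}}(\omega)$ to conclude $\psi_i(f)=0$ — which removes the restriction to the identity component that your version would otherwise carry.
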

\begin{proof}
The existence of the normal form is obtained immediately by the homotopy method of Proposition \ref{p1} and the finiteness Theorem \ref{t1}. Thus, it suffices to prove only the uniqueness of the coefficients $c_i \in \mathbb{C}\{t\}$. For this, we will need the following lemma:
\begin{lemma}
\label{l0}
For any $\omega \in x\Omega^{2}$ and any $\Phi \in \mathcal{R}_{f,H}$, there exists an $h \in x\Omega^0_{H}$ such that:
\[\omega-\Phi^*\omega=df\wedge dh.\]
\end{lemma}
\begin{proof}[Proof of the Lemma.]
Briefly, interpolate $\Phi$ by a 1-parameter formal subgroup $\Phi_t \in \hat{\mathcal{R}}_{f,H}$, i.e. $\Phi_0=Id$, $\Phi_1=\Phi$ and 
\[\Phi^*_tf=f, \hspace{0.2cm} \Phi_t(H)=H.\] 
This is always possible due to general properties of the group $\mathcal{R}_{f,H}$ (the proof goes as in \cite{F} for the ordinary case). Then
\[\omega-\Phi^*\omega=\int_0^1\frac{d}{dt}\Phi^*_t\omega dt=\int_0^1\Phi_t^*(L_{\hat{X}}\omega) dt,\]
where $\hat{X}$ is the formal vector field generated by the 1-parameter subgroup $\Phi_t$. Since $\Phi_t$ preserves $(x=0,f)$ for all $t\in [0,1]$ we have that
\[L_{\hat{X}}\omega=d(\hat{X}\lrcorner \omega)=df\wedge d\hat{g},\]
for some formal function $\hat{g}\in \hat{\Omega}^0_{x=0}$ and in particular:
\[\omega-\Phi^*\omega=df\wedge d\int_0^1\Phi^*_tg dt=df\wedge d\hat{h}.\]
Now if we consider the decomposition of the $2$-form $\omega-\Phi^*\omega$ in the deformation module $\mathcal{D}(\omega)$:
\[\omega-\Phi^*\omega=\sum_{i=1}^{\mu}\psi_i(f)\omega_i+df\wedge dh,\]
then this it can be read as a decomposition in the formal module $\hat{\mathcal{D}}(\omega)$. Comparing these two decompositions we immediately obtain $\psi_i(f)=0$ for all $i=1,...,\mu$.
\end{proof}

\noindent \textit{(Continuation of the proof of Theorem \ref{t2}).} Decompose first $\omega$ in the deformation module $\mathcal{D}(\omega)$:
\[\omega=\sum_{i=1}^{\mu}\tilde{c}_i(f)\omega_i+df\wedge dh,\]
 and take the difference of $\omega$ with $\Phi^*\omega$:
\[\omega-\Phi^*\omega=\sum_{i=1}^{\mu}(\tilde{c}_i(f)-c_i(f))\omega_i+df\wedge dh.\]
Then from Lemma (\ref{l0}) above it immediately follows that $\tilde{c}_i(t)=c_i(t)$. 
\end{proof}

\subsection{Geometric Description of the Moduli for the Nondegenerate case}

The complete geometric description of the $\mu$ functional invariants $c_i(t)$ associated to the pair $(\omega,f)$ is a difficult task and it is a part of what is usually called Gauss-Manin theory.  Below we will consider only the nondegenerate case $\mu=\mu_0=1$, i.e. for the pair:
\begin{equation}
\label{s1}
\omega=xc(f)dx\wedge dy, \hspace{0.3cm} f(x,y)=x+y^2,
\end{equation}
where $c(0)\neq 0$.

Let us consider first a small ball at the origin of $\mathbb{C}^2$  such that the fibers of $f(x,y)=t$ are transversal to the boundary of this ball over the points $t$ of a sufficiently small disc in $\mathbb{C}$, centered at the origin (the critical value of the restriction $f|_H$). Modifying the neighborhoods under consideration sufficiently, we may suppose that the fibers of the restriction $f|_H$ on the Martinet curve (which consists of two points away from the origin, for $t\neq 0$) are also transversal to the restriction of the boundary of the initial ball on the Martinet curve $H$. The latter consists of two points and thus transversality with the fibers of $f|_H$ means simply that they do not meet on $H$, i.e. the fibers $f^{-1}(t)\cap H$ are bounded within a sufficiently small segment of the Martinet curve. The intersection of each of the fibers of $f$ with the interior of the chosen ball, is an open Riemann surface $X_t$ with a set of distinguished points $X_t\cap H$. Let $\gamma(t)$ be a 1-parameter family of relative cycles on the pair of fibers representing a relative homology class in $H_1(X_t,X_t\cap H; \mathbb{C})$, so that $\gamma(t)$ is obtained  by continuous deformation of some relative cycle $\gamma(t_0)$ over a smooth pair $(X_{t_0}, X_{t_0}\cap H)$.  As is easily seen, for $t$ real and positive, the pair of fibers is contractible to its real part and as $t\rightarrow 0$ the fiber $X_t\cap H$ shrinks to a point  (see figure \ref{fig:1}). Arnol'd called the relative cycle $\gamma(t)$ arising this way, vanishing half cycle \cite{A1}: 
\[\gamma(t)=\{(x,y)\in \mathbb{R}^2/x\geq 0,\hspace{0.2cm} x+y^2=t, \hspace{0.2cm} t< \epsilon\}.\]
% For two-column wide figures use

Obviously, if $\omega$ is a germ of a Martinet 2-form and $\alpha$ is a primitive of $\omega$, then the integral 
\[V(t)=\int_{\gamma(t)}\alpha,\]
is an invariant of the pair $(\omega,f)$. In a realisation of the 2-form as a magnetic (curvature) 2-form, the integral $V(t)$ is nothing more that the magnetic flux on the 2-cell enclosed by the vanishing half-cycle.
It is easy to see that this integral is a holomorphic function of $t$ and thus we may consider its derivative $V'(t)$:
\[V'(t)=\int_{\gamma(t)}\frac{d\alpha}{df}=\int_{\gamma(t)}\frac{\omega}{df}.\]
The integrand of this integral, the so called Gelfand-Leray form \cite{A00}, \cite{Ph}, is defined as follows: let $\omega_0=xdx\wedge dy$ be the standard Martinet 2-form. Then if we denote by $E_f$ the Euler vector field of $f$ the following relation holds:
\[f\omega_0=df\wedge \alpha_0, \]
where $\alpha_0=E_f\lrcorner \omega_0=x^2dy-(xy/2)dx$ and of course $d\alpha_0=(5/2)\omega_0$.  Now, since $\omega=c(f)\omega_0$ we have that 
\[\frac{\omega}{df}=\frac{c(f)}{f}\alpha_0\]
and thus:
\[V'(t)=\frac{c(t)}{t}\int_{\gamma(t)}\alpha_0.\]
Now the latter integral $V_0(t)$ can be evaluated immediately, $V_0(t)=4/3t^{5/2}$ and it can be interpreted as the magnetic flux enclosed by the vanishing half-cycle $\gamma(t)$. Thus we have:
\begin{equation}
\label{pf}
tV'(t)=c(t)V_0(t).
\end{equation}
From this equation we obtain the expression for the invariant: 
\[c(t)=(3/4)t^{-3/2}V'(t).\] 
Its geometric explanation is direct: it measures the rate of change of the magnetic flux on the family of fibers, enclosed by the vanishing half-cycle $\gamma(t)$ for $t$ varying close to zero. 

\begin{figure}
\label{fig:1}
\begin{center}
\includegraphics[width=7cm, height=5cm]{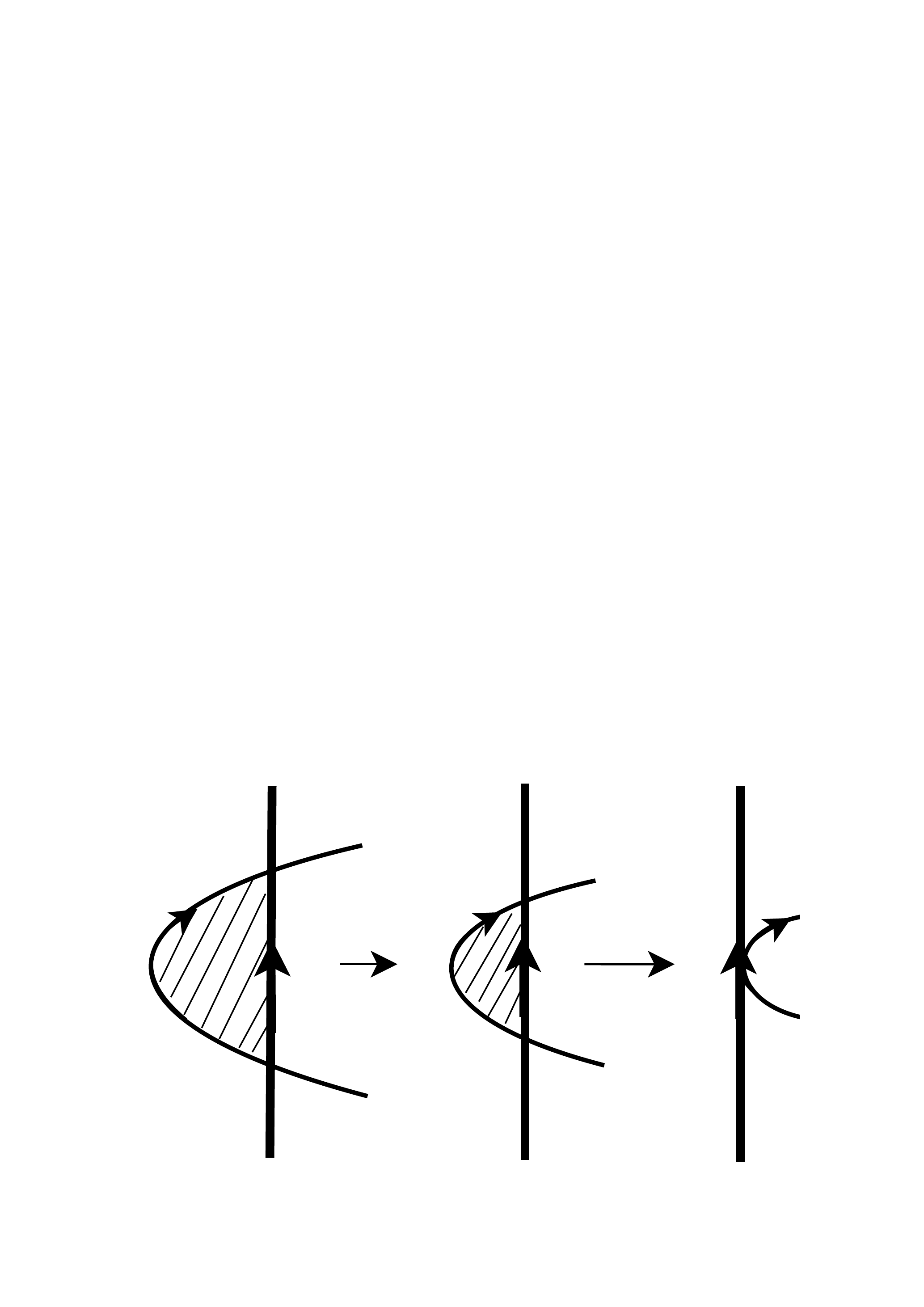}
\end{center}
\caption{Local model for a vanishing half-cycle of a boundary singularity. The area of the shaded region is $const. t^{3/2}$, while the magnetic flux is $const. t^{5/2}$.}
\end{figure}

\subsection{Martinet Normal Form for Nondegenerate Boundary Singularities}

The problem of classification of functions $f$ under the action of the group $\mathcal{R}_{\omega}$ of diffeomorphisms preserving the Martinet germ, is not so easy as is the corresponding classification of Martinet 2-forms $\omega$ relative to the $\mathcal{R}_{f,H}$-action (where as usual $H=H(\omega)$). To see this it suffices to check that the infinitesimal equation 
\[L_{v_t}f_t=\phi,\]
where $f_t=f+t\phi$ is a family of functions and $v_t$ a family of vector fields, does not admit any solutions inside the algebra $\mathfrak{r}_{\omega}$ of vector fields preserving the Martinet germ $\omega$.  Despite this fact, for the nondegenerate case $\mu=\mu_0=1$, a normal form involving exactly 1 functional invariant exists.

Fix the Martinet germ $\omega=xdx\wedge dy$. The following theorem describes the $\mathcal{R}_{\omega}$-orbit of the $\mathbf{A}_1$-boundary singularity $f=x+y^2$. It is the analog of Vey's isochore Morse lemma \cite{V} in the Martinet case:
\begin{corollary}
\label{t3}
Let $f:(\mathbb{C}^2,0)\rightarrow (\mathbb{C},0)$ be a function germ such that the origin is a regular point for $f$ but nondegenerate (Morse) critical point for the restriction $f|_{x=0}$ on the Martinet curve. Then there exists a  diffeomorphism $\Phi \in \mathcal{R}_{\omega}$  and a uniquely defined function $\psi \in \mathbb{C}\{t\}$, $\psi(0)=0$, $\psi'(0)=1$ such that
\begin{equation}
\label{e2}
\Phi^*f=\psi(x+y^2).
\end{equation} 
\end{corollary}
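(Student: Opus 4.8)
The plan is to convert the classification of $f$ under $\mathcal{R}_\omega$ (which is hard, since the infinitesimal equation $L_{v_t}f_t=\phi$ has no solution in $\mathfrak{r}_\omega$) into the already-solved classification of Martinet $2$-forms under $\mathcal{R}_{f,H}$ from Theorem \ref{t2}, by a ``swap'' of the roles of $f$ and $\omega$ followed by an absorption of the resulting functional modulus. Throughout write $\omega_0=x\,dx\wedge dy$ and $f_0=x+y^2$, the quasihomogeneous $\mathbf{A}_1$-germ of type $(1,\tfrac12;1)$, whose boundary algebra $\mathcal{Q}_{f_0,H}=\mathcal{O}/(x,y)=\mathbb{C}$ is one-dimensional ($\mu=\mu_0=1$).

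First I would straighten $f$. Since $f$ is regular at the origin and $f|_H$ is Morse, finite determinacy together with Arnol'd's normal form (Table \ref{tab:1}) provides a diffeomorphism $g$ preserving $H=\{x=0\}$ with $g^*f=f_0$. Because $g$ preserves $H$, its first component is $x\cdot(\text{unit})$, so $\omega':=g^*\omega_0=x\cdot(\text{unit})\,dx\wedge dy$ is again a Martinet germ with zero set exactly $H$. Now apply Theorem \ref{t2} to the boundary singularity $(f_0,H)$: since $\mu=1$ with basis $e_1=1$, there is $\Psi\in\mathcal{R}_{f_0,H}$ and a unique $c\in\mathbb{C}\{t\}$, $c(0)\neq 0$, with $\Psi^*\omega'=x\,c(f_0)\,dx\wedge dy$. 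Setting $\Phi_1=g\circ\Psi$ gives $\Phi_1^*\omega_0=x\,c(f_0)\,dx\wedge dy$ and $\Phi_1^*f=\Psi^*f_0=f_0$, so the whole problem is reduced to removing the functional factor $c(f_0)$ while keeping $\omega_0$ fixed.

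The crucial step is to absorb $c$ by a fibre-scaling diffeomorphism adapted to the Euler field of $f_0$. I would set $\chi(x,y)=(\sigma(f_0)\,x,\ \sigma(f_0)^{1/2}\,y)$, which satisfies $\chi^*f_0=\sigma(f_0)\,f_0=:\psi(f_0)$ with $\psi(t)=t\,\sigma(t)$. A direct computation gives $\chi^*\omega_0=\sigma(f_0)^{3/2}\psi'(f_0)\,\omega_0$, so the requirement $\chi^*\big(x\,c(f_0)\,dx\wedge dy\big)=\omega_0$ becomes the separable ordinary differential equation $c(\psi)\,\psi^{3/2}\,\psi'=t^{3/2}$, i.e. $\int_0^{\psi(t)}c(s)\,s^{3/2}\,ds=\tfrac25 t^{5/2}$. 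Writing the left-hand side as $\psi^{5/2}\!\int_0^1 c(\psi u)\,u^{3/2}\,du$, whose value at the origin is the nonzero number $\tfrac25 c(0)$, shows that the equation has a unique analytic solution $\psi\in\mathbb{C}\{t\}$ with $\psi(0)=0$ and $\psi'(0)\neq 0$. Then $\Phi:=\Phi_1\circ\chi\in\mathcal{R}_{\omega_0}$ and $\Phi^*f=\chi^*f_0=\psi(x+y^2)$, as required; the normalization $\psi'(0)=1$ is fixed by demanding that the flux $V(t)=\int_{\gamma(t)}\alpha$ of $\omega_0$ over the vanishing half-cycle match its standard value $V_0(t)=\tfrac43 t^{5/2}$ to leading order.

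Uniqueness follows from the Gauss--Manin invariant of the previous subsection: any $\Phi\in\mathcal{R}_{\omega_0}$ preserves $\omega_0$ and carries the vanishing half-cycle of $f$ to that of $\Phi^*f$, hence preserves the flux; since for $f=\psi(x+y^2)$ one has $V(t)=\tfrac43\big(\psi^{-1}(t)\big)^{5/2}$, the flux determines $\psi^{-1}$ and therefore $\psi$, so $\psi$ is an invariant of the $\mathcal{R}_{\omega_0}$-orbit of $f$. The main obstacle is precisely the third step: producing an honestly $\omega_0$-preserving diffeomorphism realizing the reparametrization $f_0\mapsto\psi(f_0)$ (which forces the separable ODE above) and verifying that its solution is genuinely analytic, not merely formal, with the correct leading behaviour so that $\psi'(0)=1$ can be imposed; by contrast the swap in the first two steps is routine once Theorem \ref{t2} is available.
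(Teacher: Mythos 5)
Your proof is essentially the paper's. After the reduction to $f=x+y^{2}$, $\omega=x\,c(f)\,dx\wedge dy$ via Theorem \ref{t2} (your $\Phi_{1}$ is the paper's first step), the paper uses precisely your fibre-scaling ansatz $\Phi(x,y)=\bigl(x\,v(f),\,y\sqrt{v(f)}\bigr)$, and the scalar equation it solves, $\tfrac{2}{5}tw'(t)+w(t)=c(t)$ with $w=v^{5/2}$, is your separable equation $c(\psi)\psi^{3/2}\psi'=t^{3/2}$ read in the inverse direction (the paper arranges $\Phi^{*}\omega_{0}=x\,c(f)\,dx\wedge dy$ and then inverts; you arrange $\chi^{*}\bigl(x\,c(f)\,dx\wedge dy\bigr)=\omega_{0}$ directly); both reduce to the same explicit integration. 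Your one substantive addition is the uniqueness argument via the flux $V(t)=\tfrac{4}{3}\bigl(\psi^{-1}(t)\bigr)^{5/2}$, which the paper asserts but does not write out. One shared caveat: the paper normalizes $c(0)=1$ and you invoke a ``flux matching'' to force $\psi'(0)=1$, but neither step is actually available --- your own computation gives $\psi'(0)=c(0)^{-2/5}$, and $c(0)$ is an invariant of the pair $(f,\omega)$ by the uniqueness clause of Theorem \ref{t2} (for instance $f=x+\tfrac{1}{2}y^{2}$ has $c\equiv\sqrt{2}$ and reduces only to $\psi(x+y^{2})$ with $\psi'(0)=2^{-1/5}$). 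So the honest conclusion of both arguments is $\psi'(0)\neq 0$ rather than $\psi'(0)=1$; this does not distinguish your proof from the paper's.
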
 

\begin{proof}
By Theorem \ref{t2} above we may choose a coordinate system $(x,y)$ such that $(x=0, f=x+y^2)$ and $\omega=xc(f)dx\wedge dy$, where $c\in \mathbb{C}\{t\}$ is a function, nonvanishing at the origin. We may suppose that $c(0)=1$. We will show that there exists a change of coordinates $(x,y)\stackrel{\Phi}{\mapsto} (x',y')$ such that the pair $(x=0,f=x+y^2)$ goes to $(x=0, \psi(f))$ for some function $\psi$ and $\omega$ is reduced to Martinet normal form. To do this, we set $x'=xv(f)$, $y'=y\sqrt{v(f)}$, where $v\in \mathbb{C}\{t\}$ is some function with $v(0)=1$ (so $\Phi(x,y)=(x',y')$ is indeed a boundary-preserving diffeomorphism). With any such function $v$ we have $\Phi^*f=\psi(f)$ for the function $\psi(f)=fv(f)$, with $\psi(0)=0$ and $\psi'(0)=1$. Now it suffices to choose $v$ so that the map $\Phi$ satisfies $v(f)det\Phi_*=c(f)$, i.e. such that the following initial value problem is satisfied for the function $w=v^{5/2}$:
\begin{equation}
\label{pf1}
\frac{2}{5}tw'(t)+w(t)=c(t), \quad w(0)=1.
\end{equation}
As is easily verified this admits an analytic solution given by the formula
\[w(t)=t^{-\frac{5}{2}}\int_0^t\frac{5}{2}s^{\frac{3}{2}}c(s)ds.\]
\end{proof}

\section{An Application: Motions of Generalised Particles in the Quantisation Limit and Local Normal Forms of Singular Lagrangians}

We consider an example from Lagrangian mechanics which concerns the motion of a charged particle on a Riemann surface in the strong coupling (quantisation) limit with an electromagnetic field, or more generaly with an Abelian gauge field (\cite{F}, \cite{J}, \cite{P2}). There are several approaches and reformulations of the problem, the most appropriate for our case being that of a generalised particle (c.f.  \cite{A'}, \cite{Bi}), a variant of which we present below.

We fix a 2-dimensional riemannian manifold $M$. We consider the family of mechanical systems described by a regular Lagrangian function $L:TM\rightarrow \mathbb{R}$  ``quadratic in the velocities'', i.e. such that in any local trivilisation of the tangent bundle with coordinates $(x,\dot{x})$ it can be expressed as:
\begin{equation}
\label{gl}
L=mL_2+eL_1+\nu L_0,
\end{equation} 
where the functions $L_i=L_i(x,\dot{x})$ are homogeneous in the velocities $\dot{x}$ of degree $i$ and:
\begin{itemize}
\item[-] $L_2(x,.)=\sum g_{ij}(x)dx_idx_j$ is a nondegenerate quadratic form $g$ on $M$ (the riemannian metric) representing the kinetic energy of the system
\item[-] $L_1(x,.)=\sum \alpha_i(x)dx_i$  corresponds to a 1-form (vector potential) $\alpha$ of gyroscopic forces (such as magnetic forces e.t.c.) represented by the 2-form $\omega=da$,
\item[-] $L_0(x,.)=-f(x)$ is independent in the velocities and represents the scalar potential of other external forces acting upon the system (such as electric e.t.c.),
\item[-] $m\in \mathbb{R}$, $e\in \mathbb{R}$ and $\nu \in \mathbb{R}$ are the coupling constants ($m$ is the mass, $e$ is the charge e.t.c.), which may be viewed as formal parameters.
\end{itemize}
It is known that motions of the generalised particle emanating from a pont $x_0\in M$ are smooth curves $t\mapsto x(t)$ on $M$, $x(0)=x_0$, which satisfy the generalised Euler-Lagrange equations:
\begin{equation}
\label{el}
m\frac{D\dot{x}}{dt}=e\dot{x}\lrcorner da-\nu df,
\end{equation}   
where $D/dt$ is the covariant derivative associated to the riemannian metric $g$ and $\lrcorner$ is the interior multiplication of a vector field with the 2-form $\omega=da$. The right hand-side of equation (\ref{el}) is known in the theory of electromagnetism as the Lorentz force. In particular, the motions $x:[t_0,t_1]\rightarrow M$ of the particle are exactly the critical points of the action functional:
\begin{equation}
\label{f}
\mathcal{A}=\int_{t_0}^{t_1}L(x(t),\dot{x}(t))dt.
\end{equation}

For the case $\nu=0$, i.e. in the absence of external forces, the geometry of this variational problem with Lagrangian $L=mL_2+eL_1$, has been studied extensively in terms of Subriemannian geometry and control theory (c.f. \cite{M1}, \cite{M2}, \cite{M3}).  There, the eventual singularities $H(d\alpha)$ of the 2-form $d\alpha$ play a special role: they are the \textit{abnormal geodesics} of the corresponding Pontryagin maximum principle and they can be obtained formally by the Euler-Lagrange equations (\ref{el}) for $m=0$.

For the case $\nu \neq 0$, the geometry of this variational problem for the values $m\neq 0$ can be studied in terms of Jacobi metrics and for most of the cases where the 1-form $L_1=\alpha$ of gyroscopic forces is nonsingular, in the sense that the  2-form $\omega=da$ is nondegenerate (symplectic) on $M$.   It is important to notice also that for any $m\neq 0$ the Legendre transform $\mathcal{L}_L:TM\rightarrow T^*M$ of $L$ is a diffeomorphism and thus the phase space of the Lagrangian system can be identified with cotangent bundle $T^*M$ with Hamiltonian $F=(\mathcal{L}^{-1}_L)^*L$ and symplectic form $\Omega=dp\wedge dx$ the natural symplectic form of the cotangent bundle. The generalised Euler-Lagrange equations (\ref{el}) transform in that way to the canonical Hamilton's equations:
\[X_F\lrcorner \Omega=dF.\]

We will be interested here in the \textit{quantisation limit equations}, i.e. for $m\rightarrow 0$ (or $e\rightarrow \infty$, $\nu \rightarrow \infty$). Notice that for $m=0$ the Lagrangian $L=eL_1+\nu L_0$ is linear in the velocities and thus its Legendre tranform $\mathcal{L}_L:TM\rightarrow T^*M$ is not a diffeomorphism. For this reason, Lagrangians linear in the velocities are called \textit{singular (or constrained)} and the dynamics that they define through the Euler-Lagrange equations:
\begin{equation}
\label{els}
e\dot{x}\lrcorner da=\nu df,
\end{equation}
is also called \textit{singular (or constrained) Lagrangian dynamics}. The equation (\ref{els}) above can be viewed as a Constrained Hamiltonian System $(f,\omega)$ with 2-form $\omega=da$ the form of gyroscopic forces and ``Hamiltonian'' $f$ defined by the potential energy $L_0$ of the initial system. To describe this geometrically notice that in the limit $m=0$, the image of the Legendre tranform $\mathcal{L}_L(TM)\subset T^*M$ defines naturally a constraint submanifold in the phase space which is exactly equal to the image of the 1-form $L_1=\alpha$, viewed as a local section $\alpha:M\rightarrow T^*M$ of the cotangent bundle:
\[Im\mathcal{L}_L=Im\alpha \subset T^*M\]
and it is thus diffeomorphic to the configuration space $M$. One has thus defined a diagram of maps:
\begin{equation}
\label{d1}
M\stackrel{\alpha}{\hookrightarrow} T^*M\stackrel{F}{\rightarrow} \mathbb{R}, 
\end{equation}   
whose left arrow is the embedding of $M$ in $T^*M$ through $\alpha$ and the right arrow is the Hamiltonian $F$. This shows that the motions of the generalised particle can be viewed inside the general scheme of the theory of Hamiltonian systems with constraints. As is easily verified, the  Euler-Lagrange equations (\ref{el}) for $m=0$, correspond to the restriction of the Hamiltonian system $(F,\Omega)$ on the constrained submanifold, i.e. on the image of the 1-form $\alpha$. Indeed, one has
\[\alpha^*\Omega=da, \quad \alpha^*F=f,\]
and thus the restriction $\alpha^*(F,\Omega)$ defines the Constrained Hamiltonian System:
\[X_f\lrcorner da=df,\]
which is exactly the system of Euler-Lagrange equations (\ref{els}) (were we have put $e=\nu=1$).

Now let as consider the problem to determine the motions of the particle in the quantisation limit. Fix a point $x_0 \in M$ and identify the germs of singular Lagrangians $L=L_1+L_0$ at $x_0$ with the germs of the corresponding pairs of potentials $L:=(\alpha,f)$. A singular Lagrangian $L$ will be called ``generic'' if the corresponding pair of potentials $(\alpha, f)$ is in general position (relative to diagram (\ref{d1})) or equivalently, the codimension of the singularities of the pair $(d\alpha,f)$ is less or equal to $2=dimM$. The fact that this definition is correct is veryfied by the Darboux-Givental theorem \cite{Gi}.  

Replace now the 1-form $\alpha$ by a 1-form $\alpha+d\xi$, where $\xi$ is some arbitrary function, and the potential $f$ by $f+c$, $c$ some constant. Then the form of the Euler-Lagrange equations (\ref{els}) does not change and so there is naturally defined an equivalence relation between singular Lagrangians: 
\begin{definition}
Two germs $L=(\alpha,f)$ and $L'=(\alpha',f')$ of singular Lagrangians at $x_0 \in M$ will be called variationally (or gauge) equivalent if their Euler-Lagrange equations (\ref{els}) are equivalent, i.e. there exists a diffeomorphism germ $\Phi$, $\Phi(x_0)=x_0$, an arbitrary function germ $\xi$ and a constant $c$ such that:
\[\Phi^*\alpha'=\alpha+d\xi, \quad \Phi^*f'=f+c.\] 
\end{definition}    

From the results of the previous sections on the classification of the pair $(d\alpha,f)$ we immediately obtain:
\begin{theorem}
\label{tl}
The germ of a generic analytic Lagrangian $L$ at a point $x_0$ on a manifold $M$, is variationally equivalent to one of the following four invariant normal forms:
\begin{equation}
\label{lnf0} 
L(x,\dot{x})=x_1\dot{x}_2-x_1,
\end{equation}
\begin{equation}
\label{lnf1} 
L(x,\dot{x})=x_1\dot{x}_2-\phi(\pm x_1^2\pm x_2^2),
\end{equation}
\begin{equation}
\label{lnf2} 
L(x,\dot{x})=\frac{x_1^2}{2}\dot{x_2}\mp x_2, 
\end{equation}
\begin{equation}
\label{lnf3} 
L(x,\dot{x})=\frac{x_1^2}{2}\dot{x}_2-\psi(x_1\pm x_2^2), 
\end{equation}
where the functions germs $\psi$ and $\phi$ are analytic functions of one variable with a simple zero at the origin and they are the unique functional invariants of the (variational orbits of the) corresponding Lagrangians. 
\end{theorem}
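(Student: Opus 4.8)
The plan is to reduce the variational classification of singular Lagrangians to the diffeomorphism-classification of pairs $(\omega,f)$ carried out in the previous sections, and then to run through the finitely many generic strata.

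First I would make the reduction precise. Writing a singular Lagrangian as its pair of potentials $(\alpha,f)$, the Euler--Lagrange equations (\ref{els}) depend on $\alpha$ only through $\omega=d\alpha$ and on $f$ only through $df$. Conversely, if $\Phi$ is a germ of a diffeomorphism with $\Phi^*\omega'=\omega$ and $\Phi^*f'=f+c$, then $d(\Phi^*\alpha'-\alpha)=\Phi^*(d\alpha')-d\alpha=\Phi^*\omega'-\omega=0$, so the ordinary Poincar\'e lemma produces a function germ $\xi$ with $\Phi^*\alpha'=\alpha+d\xi$. Hence variational (gauge) equivalence of $(\alpha,f)$ and $(\alpha',f')$ is \emph{exactly} the equivalence of the pairs $(\omega,f)$ and $(\omega',f')$ under diffeomorphisms together with the freedom $f\mapsto f+c$. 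This identifies Theorem \ref{tl} with the classification of germs of pairs $(\omega,f)$, where $\omega$ is a $2$-form on the plane and $f$ is defined up to an additive constant.

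Next I would stratify the space of such pairs by codimension and, in accordance with the genericity hypothesis and the Darboux--Givental theorem \cite{Gi}, retain only the strata of codimension $\leq 2=\dim M$. For the $2$-form there are two generic possibilities: either $\omega$ is nondegenerate at $x_0$ (codimension $0$, the symplectic case), or it vanishes nondegenerately along a smooth curve $H=H(\omega)$ (codimension $1$, the Martinet case). Superimposing the admissible behaviour of $f$ one obtains precisely four strata: (i) $\omega$ symplectic and $f$ a regular function (codimension $0$); (ii) $\omega$ symplectic and $f$ with a Morse critical point (codimension $2$); (iii) $\omega$ Martinet with both $f$ and $f|_H$ regular (codimension $1$); and (iv) $\omega$ Martinet, $f$ regular but $f|_H$ with a Morse critical point, i.e. the $\mathbf{A}_1$ boundary singularity (codimension $2$). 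Every remaining configuration -- a degenerate critical point of $f$, a Martinet form together with a critical point of $f$ itself, or a higher boundary singularity of $f|_H$ -- has codimension $\geq 3$ and is excluded. It then remains to produce the normal form on each stratum by normalising $\omega$, choosing an explicit primitive $\alpha$, and then reducing $f$ inside the $\omega$-preserving group, reading off $L=\alpha(\dot x)-f$. On stratum (i) the Darboux theorem gives $\omega=dx_1\wedge dx_2$ and, $f$ being regular, one straightens it to $x_1$; with $\alpha=x_1\,dx_2$ this yields (\ref{lnf0}). On stratum (ii) I would invoke Vey's isochore Morse lemma \cite{V}: with $\omega=dx_1\wedge dx_2$ fixed, $f$ reduces to $\phi(\pm x_1^2\pm x_2^2)$ for a unique germ $\phi$ with a simple zero, and $\alpha=x_1\,dx_2$ gives (\ref{lnf1}). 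On strata (iii) and (iv) one brings $\omega$ to the Martinet form $x_1\,dx_1\wedge dx_2$, with primitive $\alpha=\tfrac12 x_1^2\,dx_2$; on stratum (iii) a boundary-preserving diffeomorphism reduces $f$ to $\pm x_2$ with no modulus, giving (\ref{lnf2}), while on stratum (iv) Corollary \ref{t3} (the Martinet Morse lemma) reduces $f$ to $\psi(x_1\pm x_2^2)$ for a unique $\psi$ with $\psi(0)=0$, $\psi'(0)=1$, giving (\ref{lnf3}). The uniqueness of $\phi$ and $\psi$, hence their status as complete functional invariants of the variational orbits, is inherited from the uniqueness statements in Vey's lemma and in Corollary \ref{t3}.

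The hard part will not be any single normal-form computation -- each is either classical or already established in the paper -- but rather the book-keeping of the genericity stratification: one must verify that the four listed classes genuinely exhaust all germs of pairs $(\omega,f)$ whose singularity locus has codimension at most $\dim M=2$, and that ``general position relative to diagram (\ref{d1})'', as guaranteed by the Darboux--Givental theorem, is exactly the condition singling them out. A secondary technical point is the regular Martinet case (iii): it falls outside Arnol'd's Table \ref{tab:1} (it is the trivial, $\mu=0$ boundary singularity, to which Theorem \ref{t1} and the normal form of Theorem \ref{t2} do not literally apply), so one must supply the elementary solvability of the associated linear ODE in $x_1$ -- of the same type as (\ref{pf1}) -- to normalise $\omega$ while keeping $f=\pm x_2$ fixed.
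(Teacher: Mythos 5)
Your proposal is correct and follows essentially the same route as the paper: identify the four generic strata of pairs $(\omega,f)$ of codimension at most $2$, then apply Darboux plus Vey's isochore Morse lemma off the Martinet curve, and the Martinet normal form plus Corollary \ref{t3} on it. You in fact supply more detail than the paper does --- notably the reduction of gauge equivalence of $(\alpha,f)$ to diffeomorphism equivalence of $(d\alpha,f)$ via the Poincar\'e lemma, and the separate elementary ODE argument for the $\mu=0$ stratum (\ref{lnf2}), which the paper dismisses as ``a simple exercise'' while only remarking that the $\mp$ sign reflects the invariant orientation of the Martinet curve.
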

\begin{proof}
Normal forms (\ref{lnf0}), (\ref{lnf1}) correspond to the well known regular and Morse cases respectively at a point $x_0\in M\setminus H(\omega)$ in the symplectic plane. Local normal forms (\ref{lnf2}) and $(\ref{lnf3})$ correspond to regular, transversal points $x_0\in H(\omega)=\{x_1=0\}$ of $f$ on the Martinet curve (open and dense) and to (isolated) points of first order tangency of $f$ with the Martinet curve respectively. Normal form (\ref{lnf3}) is obtained immediately from Corollary \ref{t3} of Theorem \ref{t2}. The normal form (\ref{lnf2}) is a simple exercise; the $\mp$ sign comes from the fact that in the real case the Martinet curve $H(\omega)=\{x_1=0\}$ has an invariant orientation induced by the two symplectic structures in its complement in $M$. In fact, there is no real analytic diffeomorphism preserving $x_1dx_1\wedge dx_2$ and sending $x_2$ to $-x_2$.
\end{proof}

% For one-column wide figures use
%\begin{figure}
% Use the relevant command to insert your figure file.
% For example, with the graphicx package use
 % \includegraphics{example.eps}
% figure caption is below the figure
%\caption{Please write your figure caption here}
%\label{fig:1}       % Give a unique label
%\end{figure}
%

% For tables use

\section*{Acknowledgements}
The author whould like to thank Jeroen Lamb, Dmitry Turaev as well as Michail Zhitomirskii and Jean-Pierre Francoise for useful discussions and their attention on the work.

% BibTeX users please use one of
%\bibliographystyle{spbasic}      % basic style, author-year citations
%\bibliographystyle{spmpsci}      % mathematics and physical sciences
%\bibliographystyle{spphys}       % APS-like style for physics
%\bibliography{}   % name your BibTeX data base

% Non-BibTeX users please use

\end{document}